\documentclass[12pt]{amsart}
\usepackage{amssymb}

\usepackage{tikz}
\newtheorem{Teo}{Theorem}[section]
\newtheorem*{theorem*}{Theorem}
\newtheorem{Lem}[Teo]{Lemma}
\newtheorem{Obs}[Teo]{Remark}
\newtheorem{Def}[Teo]{Definition}

\newtheorem{Cor}[Teo]{Corollary}

\newtheorem{Con}[Teo]{Conjecture}
\newtheorem{lem-def}[Teo]{Lemma-Definition}

\DeclareRobustCommand\longtwoheadrightarrow
{\relbar\joinrel\twoheadrightarrow}

\renewenvironment{proof}{{\bfseries Proof.}}{\qed}
\topmargin -.4cm
\evensidemargin 1cm
\oddsidemargin 1cm

\newcommand{\N}{\mathbb N}
\newcommand{\Z}{\mathbb Z}
\newcommand{\Q}{\mathbb Q}



\def\op{\operatorname}

\def\aa{\mathcal{A}}

\def\al{\alpha}

\def\ars#1{\renewcommand\arraystretch{#1}}

\def\bb{{\mathcal B}}
\def\be{\beta}

\def\bs{\vskip.5cm}
\def\btd{B(\t,\dta)}

\def\dd{\mathcal{D}}
\def\defn{\nn{\bf Definition. }}
\def\dep{\op{\mbox{\rm\small depth}}}

\def\dm{\Delta_\mu}

\def\diso{\lower.4ex\hbox{$\downarrow$}\raise.4ex\hbox{\mbox{\scriptsize
$\wr$}}}

\def\dta{\delta}

\def\e{\medskip}

\def\ep#1{\exp(\Pi i#1)}
\def\ep{\epsilon}

\def\g{\Gamma}
\def\ga{\gamma}

\def\gen#1{\big\langle\, {#1} \,\big\rangle}

\def\ggm{\mathcal{G}_\mu}

\def\ggn{\mathcal{G}_\nu}

\def\gi{\g_{\infty}}

\def\gm{\g_\mu}

\def\inv{\op{in}_v}
\def\imp{\ \Longrightarrow\ }
\def\im{\op{Im}}

\def\inm{\op{in}_\mu}
\def\inn{\op{in}}

\def\inu{\op{in}_\nu}

\def\ism{\lower.3ex\hbox{\ars{.08}$\begin{array}{c}\,\to\\\mbox{\tiny $\sim\,$}\end{array}$}}
\def\iso{\ \lower.3ex\hbox{\ars{.08}$\begin{array}{c}\lra\\\mbox{\tiny $\sim\,$}\end{array}$}\ }

\def\ka{\kappa}

\def\kb{\overline{K}}
\def\kbx{\overline{K}[x]}

\def\khx{K^h[x]}
\def\km{k_\mu}

\def\kp{\op{KP}}

\def\kpm{\op{KP}(\mu)}

\def\kx{K[x]}

\def\lg{l\raise.6ex\hbox to.2em{\hss.\hss}l}

\def\lra{\,\longrightarrow\,}

\def\md#1{\; \mbox{\rm(mod }{#1})}

\def\mlv{Mac Lane--Vaqui\'e\ }

\def\mub{\bar{\mu}}

\def\nn{\noindent}

\def\oo{\mathcal{O}}
\def\orb{\hbox to  .3em{$\backslash$}\backslash}
\def\ord{\op{ord}}
\def\p{\mathfrak{p}}

\def\sg{\sigma}

\def\sii{\ \Longleftrightarrow\ }

\def\sub{\subseteq}
\def\supp{\op{supp}}

\def\t{\theta}

\def\ttt{\mathcal{T}}

\def\vt{v_\theta}
\def\vtd{v_{\theta,\delta}}

\newcounter{cs}
\stepcounter{cs}
\newcommand{\casos}{\begin{itemize}}
\newcommand{\fcasos}{\end{itemize}\setcounter{cs}{1}}

\newfont{\tit}{cmr12 scaled \magstep3}

\setlength{\textwidth}{6. in}
\setlength{\textheight}{9 in}

\title{Depth of extensions of valuations}

\makeatletter
\@namedef{subjclassname@2010}{%
  \textup{2010} Mathematics Subject Classification}
\subjclass[2010]{Primary 13A18; Secondary 12J20, 13J10, 14E15}

\author[Nart]{Enric Nart}
\address{Departament de Matem\`{a}tiques,         Universitat Aut\`{o}noma de Barcelona,         Edifici C, E-08193 Bellaterra, Barcelona, Catalonia}
\email{enric.nart@uab.cat}

\author[Novacoski]{Josnei Novacoski}
\address{Departamento de Matem\'{a}tica,         Universidade Federal de S\~ao Carlos, Rod. Washington Luís, 235, 13565--905, S\~ao Carlos -SP, Brazil}
\email{josnei@ufscar.br}

\thanks{Partially supported by grant PID2020-116542GB-I00  funded by the Spanish MCIN/AEI. During the realization of this project the second author was supported by a grant from Funda\c{c}\~ao de Amparo \`a Pesquisa do Estado de S\~ao Paulo (process number 2024/08989-6).}

\keywords{defect, depth, Henselian field, key polynomial, Mac Lane-Vaqui\'e chain, Okutsu sequence, valuation}

\begin{document}
\subjclass[2010]{13A18 (12J10)}

\begin{abstract}
In this paper we develop the theory of the depth of a simple algebraic extension of valued fields $(L/K,v)$. This is defined as the minimal number of augmentations appearing in some Mac Lane-Vaquié chain for the valuation on $\kx$ determined by the choice of some generator of the extension. In the defectless and unibranched case, this concept leads to a generalization of a classical result of Ore about the existence of $p$-regular generators for number fields. Also, we find what  valuation-theoretic conditions characterize the extensions having depth one.
\end{abstract}

\maketitle



\section*{Introduction}
Let $K$ be a field, $\kb$ an algebraic closure of $K$ and $v$ a valuation on $\kb$. For any subfield $K\sub L\sub \kb$, we denote the \textit{value group} and \textit{residue field} of $(L,v)$ by $vL$ and $Lv$, respectively.
Let  $K^h\subseteq \overline K$ be the Henselization of $(K,v)$.

For some $\t\in\kb$, let  $g\in\kx$ be its minimal 
polynomial over $K$ and consider the extension $L=K(\t)$ of $K$. The valued field $(L,v)$ singles out a monic irreducible factor $G\in\khx$ of $g$. The germ of this idea goes back to Hensel.   

The arithmetic data associated to $(L/K,v)$ are linked by:
\[
\deg(G)=e(L/K)\,f(L/K)\,d(L/K),
\]
where $e(L/K)$ is the \textit{ramification index}, $f(L/K)$ is the \textit{inertia degree} and $d(L/K)$ is the 
{defect} of $(L/K,v)$. 

Inspired in the work of Ore, Mac Lane realized that the arithmetic invariants of $(L,v)$  could be described in terms of the following valuation on $\kx$:
\[
\vt\colon \kx\lra \g\cup\{\infty\},\qquad f\longmapsto \vt(f)=v(f(\t)),
\]  
where $\g=v\kb$. Since $\vt^{-1}(\infty)=g\kx$, the valuations  $\vt$ and $v_{\mid L}$ are determined one by each other through
\[
\vt\colon \kx \longtwoheadrightarrow \kx/(g)\stackrel{\sim}\lra L\stackrel{v}\lra \g\cup\{\infty\}, 
\]
where the isomorphism $\kx/(g)\stackrel{\sim}\to L $ is induced by $x\mapsto \t$.

By a celebrated theorem of Mac Lane, and its generalization by Vaqui\'e, the valuation $\vt$ can be constructed from $v$ by applying a finite number of \textit{augmentations} of valuations on $\kx$ whose restriction to $K$ is $v$:
\[
v\ \stackrel{\phi_0,\ga_0}\lra\ 	\mu_0\ \stackrel{\phi_1,\ga_1}\lra\  \mu_1\ \lra\ \cdots
	\ \stackrel{\phi_{r-1},\ga_{r-1}}\lra\ \mu_{r-1}
	\ \stackrel{\phi_{r},\ga_{r}}\lra\ \mu_{r}=\vt,
\]
where $\phi_0,\dots,\phi_{r-1}\in\kx$ are certain (abstract) \textit{key polynomials} for $\vt$ and $\phi_r=g$; while $\ga_0,\dots,\ga_{r-1}\in\g$ and $\ga_r=\infty$. The initial step $v\to\mu_0$ is a formal augmentation. The monic polynomial $\phi_0$ has degree one  and $\mu_0$ is  determined by the conditions
\[
(\mu_0)_{\mid K}=v,\qquad \mu_0(\phi_0)=\ga_0,\qquad \mu_0\left(\sum\nolimits_{0\le n}a_n\phi_0^n\right)=\min_{0\le n}\{\mu_0\left(a_n\phi_0^n\right)\}.
\]
Moreover, each augmentation $\mu_{n-1} 
\to \mu_n$ is of one of the following two types:\e

\nn\textbf{Ordinary}: \ $\phi_{n}$ is a (Mac Lane-Vaqui\'e) \textit{key polynomial} for $\mu_{n-1}$.\e

\nn\textbf{Limit}:  \ $\phi_{n}$  is a (Mac Lane-Vaqui\'e) \textit{limit key polynomial} for a certain increasing family  $\aa_n$ of valuations, admitting $\mu_{n-1}$ as its first valuation.\e

In both cases, we have $\mu_n(\phi_n)=\ga_n$.
The chain is said to be  a \textit{\mlv chain} (abbreviated MLV chain)  if it additionally satisfies:
\begin{itemize}
\item $\,1=\deg \phi_0<\cdots<\deg \phi_r$.
\item \ If $\,\mu_{n-1}\to\mu_{n}\,$ is a limit augmentation, then $\vt(\phi_{n-1})=\ga_{n-1}$ and the valuations in $\aa_n$ have constant degree equal to $\deg \phi_{n-1}$.
\end{itemize}

All MLV chains of $\vt$ have the same length $r$ (\cite[Theorem 4.3]{MLV}). This common length is said to be the \textbf{depth} of $\vt$ (or $\t$) and we write  
\[
\dep(\t):=\dep(\vt):=r.
\]

As shown in \cite[Sec.6]{NN}, we have
\begin{itemize}
	\item $e(L/K)=e(\mu_0)e(\mu_1)\cdots e(\mu_{r-1})$.
	\item $f(L/K)=f(\mu_0\to\mu_1)\cdots f(\mu_{r-1}\to\mu_r)$.
	\item $d(L/K)=d(\mu_0\to\mu_1)\cdots d(\mu_{r-1}\to\mu_r)$.
\end{itemize}
For the definition of these ``relative" numerical data, we address the reader to \cite{NN}.

Therefore, the study of the numerical data of $(L/K,v)$ can be splitted into the analysis of small pieces given by single augmentations $\mu\to\nu$ of valuations on $\kx$. 

 The concept of depth is not intrinsically associated to $(L/K,v)$. Different  generators of the same extension may have different depths. This leads to defining the \textbf{depth of an extension of valuations} as
\[
\dep(L/K,v):=\min\{\dep(\t)\mid L=K(\t)\}.
\]

The aim of this paper is to study $\dep(L/K,v)$. Our main motivation for this study comes from number theory. In a pioneering work, Ore conjectured the existence of an  algorithm to compute prime ideal decomposition in number fields by the concatenation of certain procedures, classically known as \emph{dissections}  \cite{ore1,ore2}. These dissections were of two kinds, either determined by the sides of some Newton polygons with respect to certain valuations on $\Q[x]$, or by the factorization over finite fields  of some polynomials associated to the different sides of these polygons.  

For a fixed prime $p$, he showed that every number field admits a \emph{$p$-regular} generator over $\Q$, meaning that the prime ideal decomposition of $p$ can be achieved after at most three dissections. Also, he conjectured that if one has to  work with an arbitrary generator, then the prime ideal decomposition of $p$ should be always achieved after a finite number of dissections. 

Mac Lane proved Ore's conjecture  after extending his ideas to the more general setting of the computation of the extensions of a discrete rank-one valuation $v$ on an arbitrary field $K$, to separable finite extensions $L/K$ \cite{mcla,mclb}.  In  this approach, the classical dissections were replaced with suitable augmentations of valuations on $\kx$.

Let $\Q_p$ be the $p$-adic field. In our setting, Ore's result about the existence of $p$-regular generators can  be reinterpreted as follows.  

\begin{theorem*}
For every finite extension $L/\Q_p$ we have $\dep(L/\Q_p,\ord_p)\le 2$.	
\end{theorem*}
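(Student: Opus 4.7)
The plan is to recover Ore's theorem on $p$-regular generators in the local-field setting by exhibiting, for each $L/\Q_p$, a generator $\t$ whose valuation $\vt$ admits an MLV chain of length at most $2$. Since $\Q_p$ is Henselian of characteristic zero, every finite extension has trivial defect, so $[L:\Q_p]=ef$ with $e=e(L/\Q_p)$ and $f=f(L/\Q_p)$. Let $L_0\sub L$ denote the maximal unramified subextension.

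I dispose first of the easy cases. If $e=1$, I take $\t$ to be a lift to $\VR_L$ of a generator of $Lv/\F_p$; its minimal polynomial $g$ satisfies $\bar g$ irreducible in $\F_p[x]$, and the chain $v\to\mu_0\to\mu_1=\vt$ with $\phi_0=x$, $\ga_0=0$, $\phi_1=g$, $\ga_1=\infty$ has depth $1$. If $f=1$, I take $\t$ a uniformizer of $L$; its minimal polynomial is Eisenstein, yielding a chain of the same shape with $\ga_0=1/e$ of depth $1$ (the case $L=\Q_p$ being trivial with depth $0$).

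For the mixed case $e,f>1$, I pick $\theta_1\in L_0$ generating $L_0/\Q_p$ so that $\overline{\theta_1}$ generates $L_0 v=\F_{p^f}$, pick a uniformizer $\pi\in L$, and set $\t=\theta_1+\pi$; possibly replacing $\pi$ by a unit multiple ensures $L=\Q_p(\t)$ (the set of generators is dense by separability). Let $\phi_1,g\in\Z_p[x]$ denote the minimal polynomials of $\theta_1$ and $\t$ respectively. The proposed MLV chain is
\[
v\ \lra\ \mu_0\ \stackrel{\phi_1,\,1/e}\lra\ \mu_1\ \stackrel{g,\,\infty}\lra\ \mu_2=\vt,
\]
where $\mu_0$ is the $x$-Gauss valuation ($\phi_0=x$, $\ga_0=v(\t)=0$). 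Here $\phi_1$ is a key polynomial for $\mu_0$ because $\bar\phi_1$ is irreducible in $\F_p[x]$, and $\mu_1(\phi_1)=v(\phi_1(\t))=1/e$: expand $\phi_1(\t)=\phi_1(\theta_1)+\phi_1'(\theta_1)\pi+O(\pi^2)=\phi_1'(\theta_1)\pi+O(\pi^2)$ using $\phi_1(\theta_1)=0$ together with $\phi_1'(\theta_1)\in\VR_{L_0}^{\times}$ (residue-field separability).

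The main obstacle is checking that $g$ is a key polynomial for $\mu_1$. Two observations on the $\phi_1$-Newton polygon of $g$ (taken against $\mu_0$) suffice: its principal part is the single segment from $(0,1)$ to $(e,0)$, and the associated residual polynomial is irreducible. For the shape, each embedding $\tau\colon L\hra\qpb$ satisfies $\overline{\tau\t}=\overline{\tau\theta_1}$ (since $\overline{\tau\pi}=0$), so $\bar g(x)=\prod_\tau(x-\overline{\tau\theta_1})=\bar\phi_1(x)^e$, because $e$ embeddings restrict to each of the $f$ embeddings of $L_0$; therefore every $\phi_1$-coefficient $b_j$ of the expansion $g=\sum_{j=0}^e b_j\phi_1^j$ with $j<e$ reduces to zero modulo $p$, forcing $\mu_0(b_j)\ge 1$ and placing all intermediate vertices strictly above the candidate segment. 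A parallel Galois computation using $g(\theta_1)=\prod_\tau(\theta_1-\tau\t)$, splitting $\tau$ according to whether $\tau|_{L_0}=\mathrm{id}$, yields $v(g(\theta_1))=e\cdot v(\pi)=1$, hence $\mu_0(b_0)=1$ (since $b_0=g\bmod\phi_1$ has degree less than $f$). The slope $-1/e$ has denominator $e$ equal to the side length, so the only integer lattice points are the endpoints, making the residual polynomial linear and hence irreducible. Together with the strict degree inequalities $1<f<ef$, this makes the displayed chain a bona fide MLV chain of length $2$, giving $\dep(\t)\le 2$.
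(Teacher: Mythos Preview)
Your proof is correct and follows the same strategy as the paper's Theorem~3.1 (of which the $\Q_p$ statement is the special case): take $\t$ to be the sum of a generator of the maximal unramified subextension and an element whose value generates $vL/vK$, then exhibit the MLV chain $v\to\mu_0\to\mu_1\to\vt$ through the Gauss valuation and the ordinary augmentation by $\phi_1$.

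The only substantive difference is in how you verify $g\in\kp(\mu_1)$. The paper argues that the values $v\!\left(a_j(\t)\phi_1(\t)^j\right)$ lie in pairwise distinct cosets of $vK$ in $vL$, which forces the support set $S$ of the $\phi_1$-expansion to be $\{0,e\}$ and hence the residual polynomial $R_{\mu_1,\phi_1,u}(g)$ to be linear. You instead compute $\bar g=\bar\phi_1^{\,e}$ and $v(g(\theta_1))=1$ via Galois conjugates to pin down the $\phi_1$-Newton polygon, and then use integrality of the $\mu_0$-values to conclude $S=\{0,e\}$. Your argument is specific to the discrete rank-one local setting; the paper's coset argument is what carries the result over to arbitrary Henselian defectless extensions with cyclic $vL/vK$ and separable residue extension.

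One small remark: your hedge ``possibly replacing $\pi$ by a unit multiple'' is unnecessary. The Taylor computation $v(\phi_1(\t))=1/e$ already forces $e\mid e(\Q_p(\t)/\Q_p)$, and $\bar\t=\bar\theta_1$ forces $f\mid f(\Q_p(\t)/\Q_p)$, so $\Q_p(\t)=L$ for any choice of uniformizer $\pi$. The paper derives this in the course of the argument by showing directly that the $\phi$-expansion of $g$ has length exactly $e$.
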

	
One of the main results of this paper (Theorem \ref{NewOre}) is a generalization this result to defectless extensions of a Henselian valued field $(K,v)$. Also, we characterize unibranched defectless extensions of depth one (Theorem \ref{dep1}).




The main tool used for computing the depth are Okutsu sequences. Okutsu sequences of algebraic elements were introduced by Okutsu for complete, discrete, rank-one valued fields \cite{Ok}. Under the form of \textit{distinguished  chains}, these objects have been developed by several authors, mainly in the Henselian case \cite{AK1, AK2, PZ}. The equivalence between Okutsu sequences and distinguished chains was established  in \cite{OkS} for defectless extensions of Henselian valued fields of an arbitrary rank. 

For arbitrary extensions of Henselian valued fields, Okutsu sequences were considered in \cite{OS}. In this paper, we introduce Okutsu sequences in full generality and we use the results of \cite{NNP} to show that the length of any Okutsu sequence is equal to the depth of the last element in the sequence (Theorem \ref{OSdepth}). This result provides a very handy method to compute the depth of generators of $L/K$ and, consequently, the depth of $(L/K,v)$.


\section{Okutsu sequences and depth}\label{secOS}
Recall that $\g=v\kb$. From now on, we denote $\g\cup\{\infty\}$ simply by $\gi$.
 
\subsection{Distances between algebraic elements}
 Let us fix an algebraic element $\t\in\kb$ 
 of degree $n\ge 1$.
For every integer $1\le m\le n$, we define the \textbf{set of distances} of $\t$ to elements in $\kb$ of degree $m$ over $K$ as:
\[
D_m=D_m(\t,K):=\left\{v(\t-b)\mid b\in\kb,\ \deg_Kb=m\right\}\sub\gi.
\] 
Note that $\max(D_n)=\infty$.

The set $D_1$ has been extensively studied by Blaszczok and Kuhlmann for its connexions with defect and immediate extensions \cite{B,Kuhl}.

We are interested in $\max\left(D_m\right)$, the maximal distance of $\t$ to elements of a fixed degree over $K$. Since this maximal distance may not exist, we follow \cite{Kuhl} and we replace this concept with an analogous one in the context of \textit{cuts} of $\g$.

For $S,T\sub \g$ and $\ga\in \g$, the following expressions
$$\ga<S,\ \quad \ga>S,\ \quad \ga\le S,\ \quad \ga\ge S,\ \quad S<T$$
mean that the corresponding inequality holds for all $\al\in S$ and all $\be\in T$.



A \textbf{cut} in $\g$ is a pair $\dta=(\dta^L,\dta^R)$ of subsets of $\g$ such that $$\dta^L< \dta^R\quad\mbox{ and }\quad \dta^L\cup \dta^R=\g.$$ 
Note that $\dta^R=\g\setminus\dta^L$. For all $D\sub \g$ we denote by  $D^+$, $D^-$ the cuts determined by
\[
\dta^L=\{\ga\in \g\mid \exists \al\in D: \ga\leq \al\},\qquad \dta^L=\{\ga\in \g\mid \ga<D\},
\]
respectively.
If $D=\{\ga\}$, then we will write $\ga^+=(\g_{\le \ga},\g_{>\ga})$ instead of $\{\ga\}^+$ and $\ga^-=(\g_{<\ga}, \g_{\ge \ga})$ instead of $\{\ga\}^-$. These cuts are said to be 
\textbf{principal}. 



The set of cuts is totally ordered with respect to the following ordering:
\[
(\dta^L,\dta^R)\le (\ep^L,\ep^R)\ \sii\ \dta^L\sub \ep^L.
\]
The \textbf{improper} cuts 
\[
-\infty:=(\emptyset,\g),\qquad \infty^-:=(\g,\emptyset),
\] 
are the absolute minimal and absolute maximal cuts, respectively.

Also, there is an ``addition" of cuts, defined as follows:
\[
(\dta^L,\dta^R)+(\ep^L,\ep^R)=\left(\dta^L+\ep^L,\g\setminus(\dta^L+\ep^L) \right).
\] 
Note that $\dta+\infty^-=\infty^-$ for every cut $\dta$. For every $\ga\in\g$, the cut $\ga^++\dta=\left(\ga+\dta^L,\ga+\dta^R\right)$ will be denoted simply by $\ga+\dta$.

\begin{Def}\label{distance}
For $m<n$, we define $d_m(\t)$ to be the cut $D_m^+$ of $\,\g$.  

We agree that $d_n(\t)$ is the improper cut $\infty^-$.
\end{Def}

For instance, if $D_m$ has a maximal element $\ga\in\g$, then  $d_m(\t)=\ga^+$. 

Finally, let us recall some basic properties of  the function $d_1$, which follow easily fom the definitions.

\begin{Lem}\label{d1+}
Let $\t,\eta\in\kb$.
\begin{enumerate}
	\item[(a)] $d_1(\t+\eta)\ge \min\{d_1(\t),d_1(\eta)\}
	$ and equality holds if $d_1(\t)\ne d_1(\eta)$.
	\item [(b)] If $a\in K$, then $d_1(a\t)=v(a)+d_1(\t)$.
\end{enumerate}
\end{Lem}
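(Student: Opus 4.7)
Both statements unpack to elementary manipulations of the value function once one notices that, since the degree-one elements of $\overline{K}$ are precisely the elements of $K$, the set of distances simplifies to $D_1(\theta)=\{v(\theta-b)\mid b\in K\}$, and similarly for $\eta$, $\theta+\eta$ and $a\theta$. The cut $d_1(\theta)=D_1(\theta)^+$ is then completely determined by this set, and the basic observation one will use throughout is that $S\mapsto S^+$ is order-preserving and commutes with translation by any element of $\Gamma$.

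For part (b), assume $a\ne 0$; the map $b\mapsto b/a$ is a bijection of $K$ with itself, so
\[
D_1(a\theta)=\{v(a\theta-b)\mid b\in K\}=\{v(a)+v(\theta-b/a)\mid b\in K\}=v(a)+D_1(\theta).
\]
Taking cuts and using the translation invariance noted above yields $d_1(a\theta)=v(a)+d_1(\theta)$.

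For the inequality in part (a), the key tool is the ultrametric estimate
\[
v\bigl(\theta+\eta-(b+c)\bigr)\ge\min\{v(\theta-b),v(\eta-c)\},\qquad b,c\in K.
\]
Given any $\alpha=v(\theta-b)\in D_1(\theta)$ that also lies in the left side of $d_1(\eta)$, the definition of $D_1(\eta)^+$ supplies $c\in K$ with $v(\eta-c)\ge\alpha$, and the displayed inequality then produces an element of $D_1(\theta+\eta)$ of value $\ge\alpha$. By symmetry in $\theta$ and $\eta$, both $d_1(\theta)^L$ and $d_1(\eta)^L$ are contained in $d_1(\theta+\eta)^L$, which is exactly the bound $d_1(\theta+\eta)\ge\min\{d_1(\theta),d_1(\eta)\}$. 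For the equality assertion under $d_1(\theta)\ne d_1(\eta)$, suppose without loss of generality that $d_1(\theta)<d_1(\eta)$. The one delicate point—the place where strictness is essential—is extracting a single $c\in K$ with $v(\eta-c)$ strictly greater than every element of $D_1(\theta)$; this follows from the proper containment $d_1(\theta)^L\subsetneq d_1(\eta)^L=\{\gamma:\exists c'\in K,\ v(\eta-c')\ge\gamma\}$. With such a $c$ fixed, every $d\in K$ can be written uniquely as $b+c$ with $b\in K$, and the strict triangle inequality forces
\[
v(\theta+\eta-d)=\min\{v(\theta-b),v(\eta-c)\}=v(\theta-b)\in D_1(\theta).
\]
Hence $D_1(\theta+\eta)\subseteq D_1(\theta)$, and combining this with the earlier inequality gives $d_1(\theta+\eta)=d_1(\theta)=\min\{d_1(\theta),d_1(\eta)\}$, completing the argument.
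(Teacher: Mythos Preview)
The paper gives no proof of this lemma---it simply asserts that the properties ``follow easily from the definitions''---so there is nothing substantive to compare against. Your arguments for part (b) and for the equality clause of part (a) are correct and cleanly written.

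There is, however, a genuine slip in your proof of the inequality in (a). You conclude: ``By symmetry in $\theta$ and $\eta$, both $d_1(\theta)^L$ and $d_1(\eta)^L$ are contained in $d_1(\theta+\eta)^L$, which is exactly the bound $d_1(\theta+\eta)\ge\min\{d_1(\theta),d_1(\eta)\}$.'' But containing \emph{both} left parts means $d_1(\theta+\eta)^L\supseteq d_1(\theta)^L\cup d_1(\eta)^L$, which is $d_1(\theta+\eta)\ge\max\{d_1(\theta),d_1(\eta)\}$---false in general, and directly contradicted by your own equality case. The statement $\ge\min$ corresponds to containing the \emph{intersection} $d_1(\theta)^L\cap d_1(\eta)^L$. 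Moreover, what your argument actually establishes is only $D_1(\theta)\cap d_1(\eta)^L\subseteq d_1(\theta+\eta)^L$ (and its symmetric counterpart); passing from this to the intersection of the downward closures needs one more line.

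The simplest repair is to argue directly: given $\gamma\in d_1(\theta)^L\cap d_1(\eta)^L$, choose $b,c\in K$ with $v(\theta-b)\ge\gamma$ and $v(\eta-c)\ge\gamma$; then $v(\theta+\eta-(b+c))\ge\gamma$ by the ultrametric inequality, so $\gamma\in d_1(\theta+\eta)^L$. This one-line version replaces your detour through $D_1(\theta)$ and avoids the misstatement entirely.
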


\subsection{Okutsu sequences}
As mentioned in the Introduction, the valuation $\vt$ on $\kx$ contains relevant information about the extension of $v$ to the field $L=K(\t)$. This information can be captured as well by certain sequences of sets of algebraic elements. 

We say that a  subset $A\sub\kb$ has a \textbf{common degree} if all its elements have the same degree over $K$. In this case, we shall denote this common degree by $\deg_K A$. 

\begin{Def}\label{defOkS}
	An \textbf{Okutsu sequence} of $\t$ is a finite sequence 
\[
\left[A_0,A_1,\dots,A_{r-1},A_r=\{\t\}\right], 
\]
of common degree subsets of $\kb$ whose degrees grow strictly:
\begin{equation}\label{degOS}	
		1=m_0<m_1<\cdots<m_r=n,\qquad m_\ell=\deg_K A_\ell, \ \ 0\le\ell\le r,
\end{equation}
	and the following fundamental property is satisfied for all $\,0\le\ell< r$: \e
	
	(OS0) \ For all $b\in\kb$ such that $\deg_Kb<m_{\ell+1}$, we have $v(\t-b)\le  v(\t-a)$ for some $a\in A_\ell$.\e
	
	Moreover, the  following additional properties are imposed, for all $0\le\ell< r$: \e
	
	(OS1) \ $\#A_\ell=1$ whenever $\max\left(D_{m_{\ell}}\right)$ exists.\e
	
	(OS2) \ If $\max\left(D_{m_{\ell}}\right)$ does not exist, then we assume that $A_\ell$ is well-ordered with respect to the following ordering: \ $a<a'\ \sii \ v(\t-a)<v(\t-a')$.\e
	
	(OS3) \ For all $a\in A_{\ell}$, $b\in A_{\ell+1}$, we have $v(\t-a)<v(\t-b)$.
\end{Def}

Let us discuss the existence and construction of Okutsu sequences.
Consider the canonical sequence \textbf{of minimal degrees} of the distances of $\t$:
\begin{equation}\label{degDist}	
1=d_0<d_1<\cdots<d_s=n,
\end{equation}
defined recursively as follows:

$\bullet$ \ $d_0=1$,

$\bullet$ \ for every $\ell\in\N$, $d_\ell$ is the least integer $m>d_{\ell-1}$ such that there exists some $\ep\in D_m$ satisfying $\ep>D_{d_{\ell-1}}$.\e

Now, for each $0\le \ell<  s$ choose subsets $A_\ell\sub\kb$ of common degree $\deg_K A_\ell=d_\ell$ such that
\[
 \{v(\t-a)\mid a\in A_\ell\}\sub D_{d_\ell}
\]
is a well-ordered cofinal subset of $D_{d_\ell}$. Also, if for some $\ell$ there exists $\ga=\max\left(D_{d_\ell}\right)$, then we take $A_\ell=\{a\}$, for some $a\in\kb$ such that $\deg_K a=d_\ell$ and $v(\t-a)=\ga$. 

Finally, for $\ell>0$, we consider in $A_\ell$ only elements $a$ such that $v(\t-a)>D_{d_\ell-1}$. 

Clearly, $\left[A_0,A_1,\dots,A_{s-1},A_s=\{\t\}\right]$ is an Okutsu sequence of $\t$ and, conversely, all Okutsu sequences arise in this way. 

In particular, the sequence (\ref{degOS}) of degrees over $K$ of the sets $A_\ell$ of an Okutsu sequence is equal to the canonical sequence (\ref{degDist}) of minimal degrees of the distances of $\t$. In other words, $r=s$ and $d_\ell=m_\ell$ for all $0\le \ell<r$.

Our aim in this section is to prove the following theorem. 

\begin{Teo}\label{OSdepth}
The length $r$ of any Okutsu sequence of $\t$ is equal to $\dep(\t)$. Moreover, for every MLV chain of $\vt$:
\[
v\ \to\ \mu_0\ \to\ \mu_1\ \to\ \cdots \ \to\ \mu_{r-1}\ \to\ \mu_r=\vt
\]
and every $0\le \ell<r$,
the augmentation $\mu_\ell\ \to\ \mu_{\ell+1}$ is ordinary if  and only if $D_{m_\ell}$ contains a maximal element.
\end{Teo}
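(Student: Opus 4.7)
The plan is to prove both assertions simultaneously by exhibiting a concrete dictionary between MLV chains for $\vt$ and Okutsu sequences for $\t$. The expected correspondence is $\phi_\ell \leftrightarrow A_{\ell-1}$, where $A_{\ell-1}$ consists of roots in $\kb$ coming from the polynomials responsible for the augmentation $\mu_{\ell-1}\to\mu_{\ell}$. Under this dictionary the strictly increasing sequence of degrees $1=\deg\phi_0<\cdots<\deg\phi_r=n$ should coincide with the sequence (\ref{degOS}), and the ordinary-versus-limit dichotomy of augmentations should match the existence or non-existence of $\max D_{m_\ell}$.

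First I would fix an MLV chain of $\vt$ and read off an Okutsu sequence from it. For an ordinary step $\mu_\ell\to\mu_{\ell+1}$, set $A_\ell=\{a\}$ with $a$ a root of $\phi_\ell$ chosen to maximize $v(\t-a)$; for a limit step associated to the family $\aa_{\ell+1}$, take $A_\ell$ to be a well-ordered cofinal family of roots, one extracted from each of the key polynomials attached to the valuations of $\aa_{\ell+1}$, all of which have common degree $\deg\phi_\ell$ by the MLV condition. The fundamental property (OS0) --- that the elements of $A_\ell$ realize the best approximations to $\t$ among elements of degree strictly less than $m_{\ell+1}=\deg\phi_{\ell+1}$ --- should be a direct translation of the characterization of (limit-)key polynomials in \cite{NNP}: every $b\in\kb$ with $\deg_Kb<\deg\phi_{\ell+1}$ satisfies $\mu_\ell(x-b)=v(\t-b)$, and this value is bounded above by the distances from $\t$ to the roots of $\phi_\ell$ (ordinary case) or of the polynomials in $\aa_{\ell+1}$ (limit case). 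Properties (OS1)--(OS3) then follow from the choices together with the MLV requirements on $\mu_\ell(\phi_\ell)=\ga_\ell$.

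For the converse, given an Okutsu sequence I would build an MLV chain by letting $\phi_{\ell+1}$ be the minimal polynomial of the unique element of $A_\ell$ whenever $\max D_{m_\ell}$ exists, producing an ordinary augmentation $\mu_\ell\to\mu_{\ell+1}$; otherwise, take $\phi_{\ell+1}$ to be a limit key polynomial for the increasing family of valuations induced by the elements of $A_\ell$. In either case the augmentation value is $\ga_{\ell+1}=\vt(\phi_{\ell+1})$, with $\ga_r=\infty$. The strict degree growth along the chain and the constant-degree condition on the limit families both follow from (OS0) together with the construction of (\ref{degDist}), which by the discussion preceding the theorem agrees with the degree sequence (\ref{degOS}).

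The main obstacle I anticipate is verifying rigorously that, at each step, the polynomials produced really are MLV (limit) key polynomials for the preceding valuation, and that the chain terminates at exactly $\vt$ without missing an intermediate step. This requires matching the abstract degree jumps of an MLV chain with the intrinsic degree jumps (\ref{degDist}) in the distance sets $D_m$, which is precisely the type of result that the approximation-theoretic machinery of \cite{NNP} is designed to supply. Once the dictionary is in place, the common length $r$ of any MLV chain of $\vt$, which by definition is $\dep(\t)$, is identified with the common length of any Okutsu sequence of $\t$, and the ordinary/limit nature of each step is read off from whether $\max D_{m_\ell}$ exists.
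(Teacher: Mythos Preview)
Your plan is workable in principle, but the paper takes a much shorter and more structural route. Instead of building a bidirectional dictionary between MLV chains and Okutsu sequences directly, the paper observes that both are governed by the \emph{same} underlying object: the totally ordered set $\bb$ of closed ultrametric balls centred at $\t$, stratified by their degree over $K$. Theorem~\ref{ballsDepth} (quoted from \cite{NNP}) already delivers an MLV chain of $\vt$ whose nodes are indexed by the degree strata $\bb_{n_\ell}$, with the augmentation $\mu_\ell\to\mu_{\ell+1}$ ordinary precisely when $\bb_{n_\ell}$ has a last ball. The proof of Theorem~\ref{OSdepth} then reduces to two almost tautological observations: (i) the degree sequence $(n_\ell)$ attached to the ball strata coincides with the canonical distance-degree sequence $(d_\ell)$ of~(\ref{degDist}) (an easy induction using Lemma~\ref{B-D}); and (ii) $\bb_{n_\ell}$ has a last ball if and only if $\max D_{n_\ell}$ exists (immediate from the definitions). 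This yields both claims in a few lines. Your ``main obstacle'' --- checking that the polynomials you manufacture are genuine (limit) key polynomials and that no intermediate degree is skipped --- is precisely the hard content that the paper packages into the single citation of \cite[Theorem~7.2]{NNP}; by working through balls rather than roots, that verification is never unpacked.

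A minor remark on your write-up: your indexing drifts. You announce the correspondence as $\phi_\ell\leftrightarrow A_{\ell-1}$, but in the next paragraph you (correctly) take $A_\ell$ to consist of roots of $\phi_\ell$; and in the converse direction you write ``let $\phi_{\ell+1}$ be the minimal polynomial of the unique element of $A_\ell$'', whereas that polynomial has degree $m_\ell$, so it should be $\phi_\ell$. These slips would need to be ironed out before the direct construction could be carried through.
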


This result will follow easily from the results of \cite{NNP}, where $\dep(\t)$ was computed in terms of ultrametric balls.

\subsection{Depth in terms of ultrametric balls}
Consider the set of all closed ultrametric balls centered at $\t$:
\[
\bb:=\left\{\btd\mid \dta\in\gi\right\},\qquad \btd=\{a\in\kb\mid v(\t-a)\ge\dta\}.
\]
The set $\bb$ is totally ordered with respect to descending inclusion and this ordering is coherent with that of $\gi$:
\[
B(\t,\dta)\supseteq B(\t,\ep) \ \sii\ \dta\le\ep. 
\]
The last element of $\bb$ is $B(\t,\infty)=\{\t\}$.

The \textbf{degree} over $K$ of any ball $B\in\bb$ is defined as:
\[
\deg_K B=\min\{\deg_Kb\mid b\in B\}.
\]
 Obviously, the degree grows for more advanced (smaller) balls:
\[
B\supseteq B' \ \imp\ \deg_KB\le\deg_KB'. 
\]

The following lemma shows how to reinterpret the sets of distances of $\t$ in terms of ultrametric balls.

\begin{Lem}\label{B-D}
Let $\dta\in D_m$, for some $1\le m\le n$. Then, $\deg_K \btd\le m$, and equality holds if and only if $m=1$ or $\dta>D_{k}$ for all $k<m$.
\end{Lem}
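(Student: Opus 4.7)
The plan is to prove the inequality and the equality characterization separately, with both parts following almost directly from unwinding the definitions of $D_m$ and $\deg_K B(\theta,\delta)$.

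For the inequality $\deg_K B(\theta,\delta)\le m$: since $\delta\in D_m$, by the very definition of $D_m$ there exists some $b\in\kb$ with $\deg_K b=m$ and $v(\theta-b)=\delta$. This $b$ lies in $B(\theta,\delta)$, and so by the definition of the degree of a ball, $\deg_K B(\theta,\delta)\le \deg_K b=m$.

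For the equality characterization, I would split into the two cases indicated in the statement. If $m=1$, then any $b\in\kb$ satisfies $\deg_K b\ge 1$, so automatically $\deg_K B(\theta,\delta)\ge 1=m$, and combined with the upper bound we get equality. If $m>1$, the equality $\deg_K B(\theta,\delta)=m$ is equivalent to saying that $B(\theta,\delta)$ contains no element of degree strictly less than $m$; that is, for every integer $1\le k<m$ and every $b'\in\kb$ with $\deg_K b'=k$, we have $v(\theta-b')<\delta$. Translating back through the definition of $D_k$, this is exactly the statement $D_k<\delta$, i.e.\ $\delta>D_k$, for every $k<m$. The converse direction is the same argument read backwards: if $\delta>D_k$ for all $k<m$, then no $b'\in\kb$ of degree $<m$ can satisfy $v(\theta-b')\ge\delta$, hence $\deg_K B(\theta,\delta)\ge m$, forcing equality.

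There is no real obstacle here; the only mild subtlety is to remember that every element of $\kb$ has $\deg_K\ge 1$, which is what makes the case $m=1$ automatic and explains why the disjunction in the statement is phrased the way it is (the condition ``$\delta>D_k$ for all $k<m$'' would be vacuously true for $m=1$ anyway, but the $m=1$ clause makes this transparent without worrying about $k=0$).
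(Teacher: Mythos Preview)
Your proof is correct and follows essentially the same approach as the paper: pick a witness $a$ of degree $m$ with $v(\theta-a)=\delta$ to get the inequality, handle $m=1$ trivially, and for $m>1$ translate ``no element of degree $<m$ lies in $B(\theta,\delta)$'' directly into $\delta>D_k$ for all $k<m$. The only difference is that you spell out the converse direction a bit more explicitly than the paper does.
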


\begin{proof}
Let $\dta=v(\t-a)$, for some $a\in\kb$ such that $\deg_K a=m$.   Since $a\in\btd$, we have $\deg_K \btd\le m$. If $m=1$, then clearly $\deg_K \btd= m$.

Suppose that $m>1$. The condition $\deg_K \btd=m$ is equivalent to $b\not\in\btd$ for all $b\in\kb$ such that $\deg_Kb<m$. This is clearly equivalent to $\dta>D_{k}$ for all $k<m$.
\end{proof}\e

Clearly, the set of degrees of the balls in $\bb$ is finite and upper-bounded  by $n=\deg_K\t=\deg_KB(\t,\infty)$. Let these degrees be:
\begin{equation}\label{degBalls}
1=n_0<n_1<\cdots<n_{r-1}<n_r=n. 
\end{equation}
For every $m\in\N$, let $\bb_m\sub\bb$ be the subset of all balls of degree $m$. 

To each radius $\dta\in \gi$  we can associate the following valuation on $\kbx$:
\[
\vtd\left(a_0+a_1(x-\t)+\ldots+a_m(x-\t)^m\right):=\min_{0\leq j\leq m}\{v(a_j)+j\delta\}. 
\]
Note that $v_{\t,\infty}=\vt$.
Now, for every $0\le \ell< r$ consider the valuation $\mu_\ell$ on $\kx$ obtained by restriction of $\vtd$, where the radius $\dta\in\g$ is chosen as follows:\e

$\bullet$ \ If $\bb_{n_\ell}$ has a maximal element, then  $\dta$ is the radius of this maximal element. 


$\bullet$ \ If $\bb_{n_\ell}$ has no maximal element, then consider any $B\in \bb_{n_\ell}$ and any $a\in B$ such that $\deg_Ka=n_\ell$. Then, take
\[
\dta=\max\{v(\t-\sg(a))\mid \sg\in \op{Aut}(\kb/K)\}.
\]
Since $B\supseteq \btd$ and the ball $\btd$ contains some conjugate of $a$, we still have  $\deg_K\btd=n_\ell$.

In both cases, the chosen ball $\btd$ of degree $n_\ell$ is $\vt$-\textit{optimal} in the terminology of \cite[Section 4]{NNP}. 

\begin{Teo}\cite[Theorem 7.2]{NNP}\label{ballsDepth}
The following sequence is a MLV chain of $\vt$:
\[
v\ \to\ \mu_0\ \to\ \mu_1\ \to\ \cdots \ \to\ \mu_{r-1}\ \to\ \mu_r=\vt. 
\]
Moreover, for every $0\le\ell<r$, the augmentation $\mu_\ell\ \to\ \mu_{\ell+1}$ is ordinary if and only if  $\bb_{n_\ell}$ contains a last (smallest) ball.
\end{Teo}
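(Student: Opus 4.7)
The plan is to proceed by induction on $\ell$, verifying at each step that $\mu_\ell$ is a well-defined valuation on $\kx$ whose Mac Lane degree equals $n_\ell$, and that $\mu_\ell\to\mu_{\ell+1}$ is an augmentation of the asserted type. Since the degree sequence $1=n_0<\cdots<n_r=n$ is strictly increasing by definition, the MLV degree condition on the chain is automatic; termination at $\vt$ follows because $B(\t,\infty)=\{\t\}$ has degree $n$, and $v_{\t,\infty}$ vanishes precisely on $g\kx$, so its restriction to $\kx$ coincides with $\vt$.

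The preliminary input is that, for any radius $\dta\in\gi$, the Gauss-type valuation $v_{\t,\dta}$ on $\kb[x]$ is well-defined with ``key'' $x-\t$ of value $\dta$, and its restriction $\mu_\ell$ to $\kx$ extends $v$. When $\btd$ is a $\vt$-optimal ball of degree $n_\ell$, Lemma~\ref{B-D} provides $a\in\btd$ with $\deg_K a=n_\ell$, and its minimal polynomial $\phi_\ell$ over $K$ serves as a Mac Lane--Vaqui\'e key polynomial of $\mu_\ell$ of degree $n_\ell$. No key polynomial of smaller degree can exist, since any such would produce a ball of smaller degree strictly containing $\btd$, contradicting the meaning of $n_\ell$ as the $\ell$-th degree attained in $\bb$.

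To identify the augmentation type, I distinguish two cases. If $\bb_{n_\ell}$ has a smallest ball $\btd$, then $\dta$ is used in constructing $\mu_\ell$, so $\mu_\ell(\phi_\ell)<\vt(\phi_\ell)$; choosing $\phi_{\ell+1}$ of degree $n_{\ell+1}$ from the next-level optimal ball, the augmentation $\mu_\ell\to\mu_{\ell+1}$ is ordinary with key polynomial $\phi_{\ell+1}$ and value $\mu_{\ell+1}(\phi_{\ell+1})=\vt(\phi_{\ell+1})$. If $\bb_{n_\ell}$ has no smallest ball, the radii of balls in $\bb_{n_\ell}$ form a cofinal set without maximum; walking along a well-ordered cofinal sub-sequence $(\dta_{\ell,i})_{i\in I}$ gives an increasing family $\aa_{\ell+1}=\{\mu_{\ell,i}\}$ of Gauss-type restrictions on $\kx$, all of constant Mac Lane degree $n_\ell$, with $\mu_\ell$ as first element. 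One then verifies the two MLV limit conditions: $\mu_{\ell,i}(\phi_\ell)=\vt(\phi_\ell)$ (so that $\phi_\ell$ stabilizes across the family), and $\phi_{\ell+1}$ is a limit key polynomial along $\aa_{\ell+1}$, making $\mu_\ell\to\mu_{\ell+1}$ a limit augmentation.

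The main obstacle will be the limit case. Constructing $\aa_{\ell+1}$ with provably constant degree $n_\ell$ requires showing that no lower-degree polynomial acquires an intermediate value as one moves through the family; and proving $\phi_{\ell+1}$ is a genuine \emph{limit} key polynomial rather than an ordinary one demands establishing that $(\mu_{\ell,i}(\phi_{\ell+1}))_{i\in I}$ strictly grows and is cofinal in $\vt(\phi_{\ell+1})$ within $\gi$. These cofinality claims trace back to the very definition of the $n_\ell$: $n_{\ell+1}$ is the first degree beyond $n_\ell$ at which a new distance surpasses all of $D_{n_\ell}$, so the non-existence of a maximum in $\bb_{n_\ell}$ is precisely what obstructs an ordinary augmentation and forces limit-type behavior; conversely, a smallest ball in $\bb_{n_\ell}$ provides the discrete data needed for the ordinary key-polynomial construction.
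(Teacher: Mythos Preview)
The paper does not prove Theorem~\ref{ballsDepth}; it is quoted verbatim from \cite[Theorem~7.2]{NNP} and used as a black box to derive Theorem~\ref{OSdepth}. The only supporting material here is the construction of the valuations $\mu_\ell$ (including the specific ``optimal'' choice of radius $\dta$ in the non-terminal case, via conjugates of an element of minimal degree), together with a pointer to \cite[Section~4]{NNP} for the definition of a $\vt$-optimal ball. So there is no in-paper proof to compare your proposal against.

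As for your sketch itself: the overall architecture is the right one and matches what \cite{NNP} does, but several of the steps you flag as routine are in fact the substance of that paper. Two points in particular need care. First, the assertion that the minimal polynomial $\phi_\ell$ of an element $a\in\btd$ of degree $n_\ell$ is a key polynomial for $\mu_\ell$ of minimal degree is not a consequence of Lemma~\ref{B-D}; it requires exactly the ``optimality'' of the ball (which is why the paper, in the non-maximal case, does \emph{not} take an arbitrary $\dta$ with $\deg_K\btd=n_\ell$, but rather the specific $\dta=\max\{v(\t-\sg(a))\}$). Your argument that ``any key polynomial of smaller degree would produce a ball of smaller degree strictly containing $\btd$'' goes in the wrong direction: balls of degree $<n_\ell$ certainly exist (they are the larger balls), so this does not by itself rule out smaller-degree key polynomials. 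Second, in the ordinary case you must check not only that $\phi_{\ell+1}\in\kp(\mu_\ell)$ but that the restriction to $\kx$ of $v_{\t,\dta'}$ (for the radius $\dta'$ defining $\mu_{\ell+1}$) literally coincides with the augmented valuation $[\mu_\ell;\phi_{\ell+1},\ga_{\ell+1}]$; this identification is again one of the main results of \cite{NNP}. Your candid acknowledgment that the limit case is the main obstacle is correct, and the cofinality and constant-degree claims you list are indeed what must be proved there.
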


Let us show that Theorem \ref{OSdepth} is an immediate consequence of this result. \e

{\bf Proof of Theorem \ref{OSdepth}.}
First of all, the sequence (\ref{degBalls}) of degrees over $K$ of the balls in $\bb$ coincides with the sequence (\ref{degDist}) of minimal distances of $\t$.

Indeed, $d_0=1=n_0$. Suppose that for some $\ell\ge0$ we have $d_\ell=n_\ell$. By definition, $n_{\ell+1}$ is the least integer $m>n_\ell$ such that there exists a ball $B\in\bb$ with $\deg_KB=m$.  By Lemma \ref{B-D}, this is equivalent to saying that $n_{\ell+1}=d_{\ell+1}$ is the least integer $m>d_{\ell}=n_{\ell}$ such that there exists some $\ep\in D_m$ satisfying $\ep>D_{d_{\ell-1}}$. 

Finally, it follows immediately from the definitions that $\bb_{n_\ell}$ has a last ball $\btd$ if and only if $\dta=\max\left(D_{n_\ell}\right)$. \hfill{$\Box$}

\begin{Obs}
The paper \cite{NNP} deals only with balls of finite radius. Hence, Theorem \ref{ballsDepth} is proven in \cite{NNP} only for valuations on $\kx$  with trivial support. However, it is trivial to check that all results of \cite{NNP} remain valid if we admit balls with radius equal to infinity. In particular, our statement of Theorem \ref{ballsDepth} is correct.
\end{Obs}

\section{Residual polynomial operators}

Residual polynomial operators are the key objects  facilitating the application of Theorem \ref{OSdepth} to the computation of the depth of algebraic elements. These operators were introduced by \O. Ore in his pioneering work \cite{ore1, ore2}, and generalized by J. Montes \cite{montes} to a more general valuation-theoretical context. 

All results in this  section could be easily deduced from \cite{KP, MLV}. For the ease of the reader we give a self-contained presentation of this topic, with short proofs.

\subsection{Graded algebra of a valuation on $\kx$}\label{subsecGradedAlgebra}

The graded algebra of the valued field $(K,v)$ is defined as
$\op{gr}_v(K)=\bigoplus_{\dta \in vK}\mathcal P_\dta(v)/\mathcal P_\dta^+(v)$, where
\[
\mathcal P_\dta(v)=\{a\in K\mid v(a)\ge \dta\}\supseteq \mathcal P^+_\dta(v)=\{a\in K\mid v(a)> \dta\}.
\] 
Every $a\in K^*$ has a homogeneous \textbf{initial coefficient}  $\inv a\in\op{gr}_v(K)$, defined as the image of $a$ in $\mathcal P_{v(a)}/\mathcal P_{v(a)}^+$. 

Let $\mu\colon \kx\to \gi$ be an extension of $v$ to $\kx$, taking values in the divisible hull $\g=v\kb$ of $vK$.
The \textbf{support} of $\mu$ is the prime ideal  
\[
\p:=\supp(\mu)=\mu^{-1}(\infty)\in\op{Spec}(\kx). 
\]
The valuation $\mu$ induces in an obvious way a valuation $\mub$ on the field of fractions of $\kx/\p$. We denote by $\gm$ and $\km$ the value group and residue field of $\mu$, defined as the value group and residue field of $\mub$, respectively.  
Since $\gm\sub\g$, the quotient $\gm/vK$ is a torsion group.

The {\bf graded algebra} of $\mu$ is the integral domain $\ggm=\bigoplus_{\ep \in \Gamma_\mu}\mathcal P_\ep(\mu)/\mathcal P_\ep^+(\mu)$, where
\[
\mathcal P_\ep(\mu)=\{f\in K[x]\mid \mu(f)\geq \ep\}\supseteq\mathcal P_\ep^+(\mu)=\{f\in K[x]\mid \mu(f)> \ep\}.
\]
We say that $\ep$ is the \textbf{grade} of the elements in $\mathcal P_\ep(\mu)/\mathcal P_\ep^+(\mu)$.
Every $f\in K[x]\setminus\p$ has a homogeneous initial coefficient  $\inm f\in\ggm$ of grade $\mu(f)$, defined as the image of $f$ in $\mathcal P_{\mu(f)}/\mathcal P_{\mu(f)}^+$. 

If $\p\ne0$, then $\p=g\kx$ for some monic irreducible $g\in \kx$. The choice of a root $\t\in\kb$ of $g$ determines an isomorphism $\kx/\p\simeq K(\t)$, and hence an identification of $\mub$ with an extension of $v$ 
to the field $L:=K(\t)$, via:
\[
\mu\colon \kx \longtwoheadrightarrow \kx/\p\stackrel{\mub}\lra \gi.
\]
We emphasize that $\mu=\vt$ only when the valuation on $L$ induced by $\bar{\mu}$ coincides with $v$.
In any case, there is a canonical isomorphism of $\op{gr}_v(K)$-algebras: 
\begin{equation}\label{natIso}
\ggm\simeq\op{gr}_{\mub}(L),\qquad  \inm f\mapsto \inn_{\mub}\left(f(\t)\right).
\end{equation}
In particular,  $\ggm$ is a \textbf{simple} graded algebra; that is, all nonzero homogeneous elements in $\ggm$ are units.

If $\p=0$ and $\km/K\!v$ is an algebraic extension, then $\mu$ is a valuation of  a very special kind.  The graded algebra $\ggm$ is simple too.   We shall ignore this case because it plays no role in the problems addressed in this paper.

If $\p=0$ and the extension $\km/K\!v$ is transcendental, then we  say that $\mu$ is \textbf{residue-transcendental}. In this case, 
the graded algebra $\ggm$ contains homogeneous prime elements.

\begin{Def}\label{keypolyno}
	A monic $\phi\in K[x]$ is a \textbf{key polynomial}  for $\mu$ if  $(\inm\phi)\ggm$ is a homogeneous prime ideal containing no initial coefficient \ $\inm f$ with  $\deg f< \deg \phi$.
\end{Def}

Equivalently, \cite[Prop. 2.3]{KP} shows that a monic $\phi\in K[x]$ is a key polynomial  for $\mu$ if  $(\inm\phi)\ggm$ is a homogeneous prime ideal and the \emph{truncated function}  $\mu_\phi$ is equal to $\mu$. This means that for all nonzero $f\in\kx$, we have
\begin{equation}\label{muf}
f=\sum\nolimits_{0\le n}a_n \phi^n,\ \ \deg(a_n)<\deg(\phi)\ \imp\ \mu(f)=\min_{0\le n}\left\{\mu\left(a_n \phi^n\right)\right\}.
\end{equation}

We denote by $\kpm$ the set of all key polynomials for $\mu$. These polynomials are  necessarily irreducible in $\kx$.

Let $\ttt(\g)$ be the set of all extensions of $v$ to $\kx$ taking values in $\g$. This set has the structure of  a tree (see \cite{VT}) with respect to the following partial ordering:
\[
\mu\le\nu\ \sii \ \mu(f)\le \nu(f)\quad\mbox{for all }\ f\in\kx. 
\]
Being a tree means that all intervals are totally ordered. 
The maximal elements in $\ttt(\g)$ are said to be \textbf{leaves} of this tree.

The following criterion  for the existence of key polynomials follows from \cite[Proposition 1.4]{Vaq1}.

\begin{Teo}\label{existence}
For every $\mu\in\ttt(\g)$ the following conditions are equivalent.
\begin{enumerate}
\item[(a)] $\kpm\ne\emptyset$.
\item[(b)] $\mu$ is residue-transcendental.
\item [(c)] $\mu$ is not a leaf of $\ttt(\g)$. 
\end{enumerate}
\end{Teo}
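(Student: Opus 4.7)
My plan is to prove the three equivalences as two biimplications: $(a) \Leftrightarrow (b)$ via the structure of the graded algebra $\ggm$, and $(a) \Leftrightarrow (c)$ via construction and extraction of augmentations. The ``forward'' directions $(a) \Rightarrow (b)$ and $(a) \Rightarrow (c)$ are essentially immediate, so the work is in the converses $(b) \Rightarrow (a)$ and $(c) \Rightarrow (a)$, both of which I would settle by choosing a monic polynomial $\phi$ of minimal degree in a suitable set and verifying it is a key polynomial.

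For $(a) \Rightarrow (b)$: any $\phi \in \kpm$ makes $\inm\phi \in \ggm$ a nonzero homogeneous non-unit, because $\inm 1$ has grade $0 < \mu(\phi)$ but also $\deg(1) < \deg\phi$, so by Definition \ref{keypolyno} the unit $1$ is not in $(\inm\phi)\ggm$; thus $\ggm$ is not simple. Since the text already records that $\ggm$ is simple whenever $\supp(\mu) \neq 0$, and the same check (applied to the induced valuation $\mub$ on $K(x)$) gives simplicity when $\supp(\mu) = 0$ with $\km/Kv$ algebraic, we conclude $\mu$ is residue-transcendental. For $(a) \Rightarrow (c)$: given $\phi \in \kpm$ with $\mu(\phi) = \ga$, pick any $\ga' \in \g$ with $\ga' > \ga$ and define $\nu$ on $\phi$-expansions by $\nu(\sum_n a_n \phi^n) = \min_n\{\mu(a_n) + n\ga'\}$. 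Since $\phi \in \kpm$ makes $\mu$ equal to the truncation $\mu_\phi$ via formula (\ref{muf}), standard verifications give that $\nu$ is a valuation in $\ttt(\g)$ with $\nu \geq \mu$ pointwise, and $\nu(\phi) = \ga' > \ga$ produces $\nu > \mu$.

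For $(c) \Rightarrow (a)$, I would fix $\nu > \mu$ in $\ttt(\g)$ and set $J = \{f \in \kx : \nu(f) > \mu(f)\}$, then take $\phi$ a monic polynomial of least degree in $J$; the claim is $\phi \in \kpm$. The minimal-degree clause of Definition \ref{keypolyno} is immediate from minimality of $\deg\phi$: if $\inm g = (\inm\phi)(\inm h)$ with $\deg g < \deg\phi$ then $\nu(g) \geq \nu(\phi) + \nu(h) > \mu(\phi) + \mu(h) = \mu(g)$, putting $g \in J$ and contradicting minimality. The primality of $(\inm\phi)\ggm$ would be proved by Euclidean division by $\phi$: for $f_1, f_2$ with $\inm\phi \nmid \inm f_i$, write $f_i = \phi q_i + r_i$ with $\deg r_i < \deg\phi$, use minimality to conclude $r_i \notin J$ and $\inm f_i = \inm r_i$, and then track the remainder of $f_1 f_2$ modulo $\phi$ to conclude $\inm(f_1 f_2) \notin (\inm\phi)\ggm$. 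Implication $(b) \Rightarrow (a)$ follows the same template with $\phi$ taken as a monic polynomial of minimal degree such that $\inm\phi$ is a non-unit of $\ggm$; such $\phi$ exists precisely because residue-transcendence makes $\ggm$ non-simple, and the verification that it is a key polynomial repeats the argument above with ``$\inm\phi$ is a non-unit'' replacing ``$\phi \in J$''.

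The main obstacle I anticipate is the primality verification in the last step: the combinatorics of $\phi$-expansions must be controlled to ensure that cancellations among initial coefficients in the product $f_1 f_2$ modulo $\phi$ cannot push $\inm(f_1 f_2)$ into $(\inm\phi)\ggm$ when neither factor has this property. This is the delicate combinatorial core of the Mac Lane--Vaqui\'e theory, but once properly set up it reduces to minimality of $\deg\phi$ combined with the truncation formula (\ref{muf}), so all the essentials are available.
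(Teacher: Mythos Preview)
The paper does not supply its own proof of this theorem: it merely records that the criterion ``follows from \cite[Proposition 1.4]{Vaq1}''. So there is no in-paper argument to compare against; your proposal is being measured only against the surrounding text and the standard literature.

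Your overall strategy is the standard one and is essentially sound, with two caveats. First, a small slip: in $(a)\Rightarrow(b)$ you assert $0<\mu(\phi)$, which need not hold (e.g.\ $\phi=x$ with $\mu(x)<0$); fortunately the degree argument $\deg(1)<\deg(\phi)$ already suffices, so this is harmless. Second, and more substantively, your claim that $(b)\Rightarrow(a)$ ``repeats the argument'' of $(c)\Rightarrow(a)$ glosses over a real difference. In $(c)\Rightarrow(a)$ the primality of $(\inm\phi)$ falls out cleanly once you observe that $\inm f\in(\inm\phi)\ggm$ if and only if $f\in J$, and that $J$ is multiplicatively prime because $\nu(f_1f_2)>\mu(f_1f_2)$ forces $\nu(f_i)>\mu(f_i)$ for some $i$; the auxiliary valuation $\nu$ is doing the work. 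In $(b)\Rightarrow(a)$ there is no such $\nu$ at hand, and the Euclidean-remainder bookkeeping you sketch does not by itself produce primality: from $f_i=\phi q_i+r_i$ with $\inm r_i$ a unit you cannot in general conclude $\inm f_i=\inm r_i$ (only that they agree modulo $(\inm\phi)$), and tracking the $\phi$-remainder of $f_1f_2$ then becomes genuinely delicate. The standard route here is to first prove $\mu=\mu_\phi$ (which your minimality hypothesis does give), then establish the graded isomorphism $\ggm\simeq\ggm^0[\inm\phi]$ as a polynomial algebra over the subalgebra of homogeneous units, whence $(\inm\phi)$ is visibly prime. Alternatively one may prove $(b)\Rightarrow(c)$ directly and then invoke your $(c)\Rightarrow(a)$. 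Either way, some additional structural input beyond the $(c)\Rightarrow(a)$ template is required.
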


\subsection{Residual polynomial operators}\label{subsecOre}

Let $\mu$ be a node of $\ttt(\g)$; that is, $\mu$ is an extension of $v$ to $\kx$ such that $\gm\sub\g$.

Let $\dm=\mathcal{P}_0(\mu)/\mathcal{P}^+_0(\mu)\sub\ggm$ be the subring formed by all homogeneous elements of grade zero in $\ggm$.
Let  $\kappa=\kappa(\mu)$ be the relative algebraic closure of $K\!v$ in $\km$.
There are canonical injective ring homomorphisms 
\[
K\!v\hookrightarrow\kappa\hookrightarrow\dm\hookrightarrow \km.
\] 

The \textbf{relative ramification index} of $\mu$ is defined as 
\[
e=e(\mu):=\left(\gm\colon \gm^0\right),
\]
where $\gm^0$
is the subgroup of all grades of homogeneous units in $\ggm$.

If $\mu$ has nontrivial support, then obviously $\kappa\simeq\dm\simeq\km$. Also, as remarked in the previous section, the graded algebra $\ggm$ is simple; thus,  $\gm^0=\gm$ and $e(\mu)=1$.\e

\emph{From now on, we suppose that $\mu$ is residue-transcendental and we fix a key polynomial $\varphi\in\kpm$ of minimal degree.}\e

 By \cite[Proposition 3.5]{KP}, we have
\[
\gm^0=\left\{\mu(a)\mid a\in \kx,\ 0\le \deg(a)<\deg(\varphi)\right\}.
\]

\noindent{\bf Notation. }For every $a\in \kx$ such that $0\le \deg(a)<\deg(\varphi)$, we shall denote the homogeneous unit $\inm a\in\ggm$ simply  by $\bar{a}$.\e

By (\ref{muf}), $\gm=\gen{\gm^0,\mu(\varphi)}$, so that $e$ is the least positive integer such that $e\mu(\varphi)\in\gm^0$. Let us fix $u\in\kx$, such that $0\le \deg(u)<\deg(\varphi)$ and $\mu(u)=e\mu(\varphi)$. Take
\[
\xi=\left(\inm \varphi^e\right)\bar{u}^{-1}\in\dm.
\]
Then, it is well-known (see \cite[Theorem 4.6]{KP}) that $\xi$ is transcendental over $K\!v$ and
\[
\dm=\kappa[\xi],\qquad \ka(\xi)\simeq \km. 
\]
In particular, the multiplicative group $\dm^*$ of units  of $\dm$ coincides with $\ka^*$.

The choice of the pair $\varphi, u$ determines a \textbf{residual polynomial operator} 
$$
R=R_{\mu,\varphi,u}\colon\;\kx\lra \kappa[y].
$$
Let us recall its definition. 
We agree that $R(0)=0$.
For a nonzero $g \in K[x]$ with $\varphi$-expansion $g=\sum\nolimits_{0\le n}a_n\varphi^n$, let us denote
$$
S=\left\{0\le n\mid \mu\left(a_n \varphi^n\right)=\mu(g)\right\},\quad 
\ell_0=\min(S),\quad \ell=\max(S). 
$$
If we denote $\ga=\mu(\varphi)$, then for all  $n\in\N$ we have
$$
n\in S\ \sii\  \mu(a_n)+n\ga=\mu(a_{\ell_0})+\ell_0\ga\ \sii\  (n-\ell_0)\ga=\mu(a_{\ell_0})-\mu(a_n).
$$
This implies that $(n-\ell_0)\ga$ belongs to $\gm^0$, so that  $n-\ell_0=je$ for some $j\in\N$. Since $\ell\in S$, this shows in particular that $\ell-\ell_0=de$ for some $d\in\N$. Let us denote
$$
\ell_j=\ell_0+je,\qquad  0\le j\le d.
$$
Note that $\ell_d=\ell$. Finally,  for all $0\le j\le d$, consider the \emph{residual coefficient}
\begin{equation}\label{resCoeff}
	\zeta_j=\begin{cases}
		\bar{a}_{\ell_j}/\left(\bar{a}_\ell\,\bar{u}^{d-j}\right)\in \dm^*=\kappa^*,&\quad \mbox{ if }\ \ell_j\in S,\\
		0,&\quad \mbox{ otherwise}.
	\end{cases}
\end{equation}

\defn
$\,R(g):=\zeta_0+\zeta_1\,y+\cdots+\zeta_{d-1}y^{d-1}+y^d\in \kappa[y]$.\e

Since $\ell_0\in S$, we have $\zeta_0\ne 0$. Let us display
the essential property of this operator.

\begin{Teo}\label{Hmug}\cite[Theorem 5.3]{KP}
	For all nonzero $g\in\kx$, we have 
\begin{equation}\label{Rg}
\inm g=\bar{a}_\ell\, \bar{u}^d\,\left(\inm \varphi\right)^{\ell_0}\,R(g)(\xi). 
\end{equation}
\end{Teo}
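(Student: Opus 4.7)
The plan is to unwind both sides by computing $\inm g$ directly from the $\varphi$-expansion of $g$, and then to recognize $R(g)(\xi)$ in the resulting expression by using the relation $\inm \varphi^e = \bar{u}\,\xi$.

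First I would write $g=\sum_{0\le n}a_n\varphi^n$ with $\deg(a_n)<\deg(\varphi)$. Since $\varphi$ is a key polynomial of minimal degree, the truncation property (\ref{muf}) gives $\mu(g)=\min_n\{\mu(a_n\varphi^n)\}$, and the set $S$ of indices attaining this minimum is precisely the set of $n$ for which $a_n\varphi^n$ contributes a nonzero term modulo $\mathcal{P}^+_{\mu(g)}(\mu)$. Hence
\[
\inm g=\sum_{n\in S}\inm\!\left(a_n\varphi^n\right)=\sum_{n\in S}\bar{a}_n\,(\inm\varphi)^n,
\]
where I use that $\deg(a_n)<\deg(\varphi)$ to legitimately denote $\inm a_n$ by $\bar{a}_n$ (following the notation fixed in the excerpt) and that initial coefficients are multiplicative on products whose $\mu$-values add.

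Next I would exploit the structure of $S$. As recalled in the excerpt, every $n\in S$ is of the form $n=\ell_j:=\ell_0+je$ for some $0\le j\le d$, and in particular $\ell_0,\ell_d=\ell\in S$. From the definition $\xi=(\inm\varphi^e)\bar{u}^{-1}$ we get $(\inm\varphi)^e=\bar{u}\,\xi$, hence
\[
(\inm\varphi)^{\ell_j}=(\inm\varphi)^{\ell_0}\,(\inm\varphi)^{je}=(\inm\varphi)^{\ell_0}\,\bar{u}^{\,j}\,\xi^{j}.
\]
Substituting this into the previous display and pulling the common factor $(\inm\varphi)^{\ell_0}$ out of the sum yields
\[
\inm g=(\inm\varphi)^{\ell_0}\sum_{j:\,\ell_j\in S}\bar{a}_{\ell_j}\,\bar{u}^{\,j}\,\xi^{j}.
\]

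Finally, I would factor the homogeneous unit $\bar{a}_\ell\,\bar{u}^{\,d}$ out of each summand: since $\bar{a}_{\ell_j}\,\bar{u}^{\,j}=\bar{a}_\ell\,\bar{u}^{\,d}\cdot\bigl(\bar{a}_{\ell_j}/(\bar{a}_\ell\,\bar{u}^{\,d-j})\bigr)$, the parenthesized fraction is exactly the residual coefficient $\zeta_j$ defined in (\ref{resCoeff}) when $\ell_j\in S$, while the convention $\zeta_j=0$ for $\ell_j\notin S$ lets us extend the sum over all $0\le j\le d$ without changing it. Noting also that $\zeta_d=1$ by construction, we arrive at
\[
\inm g=\bar{a}_\ell\,\bar{u}^{\,d}\,(\inm\varphi)^{\ell_0}\sum_{j=0}^{d}\zeta_j\,\xi^{\,j}=\bar{a}_\ell\,\bar{u}^{\,d}\,(\inm\varphi)^{\ell_0}\,R(g)(\xi),
\]
which is the claimed identity.

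There is no real obstacle here; the only points requiring care are (i) the identification of $\inm g$ with the sum over $n\in S$, which uses minimality of $\deg(\varphi)$ via (\ref{muf}), and (ii) the bookkeeping between the indexing set $S$ and the arithmetic progression $\{\ell_0+je\}_{0\le j\le d}$, handled cleanly by the $\zeta_j=0$ convention. Everything else is a straightforward manipulation in the graded algebra $\ggm$.
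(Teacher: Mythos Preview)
Your proof is correct and follows essentially the same route as the paper's own argument: compute $\inm g$ as the sum $\sum_{\ell_j\in S}\bar{a}_{\ell_j}(\inm\varphi)^{\ell_j}$, factor out $(\inm\varphi)^{\ell_0}$, and use $(\inm\varphi)^e=\bar{u}\,\xi$ to recognize $R(g)(\xi)$. The paper compresses this into a single displayed chain of equalities, but the content is identical to what you wrote.
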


Indeed, if we denote $\ep=\bar{a}_\ell$, $\pi=\inm \varphi$, then this follows immediately from:

$$
\ep^{-1}\inm g=\sum_{\ell_j\in S}\ep^{-1}\bar{a}_{\ell_j}\,\pi^{\ell_j}=\pi^{\ell_0}\sum_{\ell_j\in S}\ep^{-1}\bar{a}_{\ell_j}\,\pi^{je}=\bar{u}^d\pi^{\ell_0}\sum_{\ell_j\in S}\zeta_j\,(\pi^e/\bar{u})^j.
$$


As a consequence, we get the following description of the set $\kpm$ \cite[Propositions 6.3 + 6.6]{KP}.

\begin{Teo}\label{charKP}
	For a residue-transcendental  $\mu$, take $\varphi\in\kpm$ of minimal degree $m$. 
	A monic $Q\in\kx$ is a key polynomial for $\mu$ if and  only if either
	\begin{itemize}
		\item $\deg(Q)=m$ \,and\; $\inm Q=\inm \varphi$, or\vskip0.1cm
		\item $\deg(Q)=me\deg(R(Q))$ \,and\; $R(Q)$ is irreducible in $\kappa[y]$.
	\end{itemize}
\end{Teo}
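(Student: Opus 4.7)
The strategy is to use Theorem~\ref{Hmug}, which provides
\[
\inm Q \;=\; \bar{a}_\ell\,\bar{u}^d\,\pi^{\ell_0}\,R(Q)(\xi),
\]
with $\pi=\inm\varphi$ and $\bar{a}_\ell\bar{u}^d$ a homogeneous unit of $\ggm$. Hence $(\inm Q)\ggm = (\pi^{\ell_0}R(Q)(\xi))\ggm$, so both halves of Definition~\ref{keypolyno} translate directly into conditions on the pair $(\ell_0(Q),R(Q))$.

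First I would determine when $\pi^{\ell_0}R(Q)(\xi)$ generates a prime ideal in $\ggm$. The structural ingredients are: the grade-zero subring $\dm=\kappa[\xi]$ is a polynomial ring in one variable; $\pi$ is prime in $\ggm$ since $\ggm/(\pi)\ggm\simeq\dm$ is a domain; $\pi$ and nonzero elements of $\dm$ are coprime on grade grounds, as $\pi$ has grade $\mu(\varphi)\notin\gm^0$ while $\dm$ sits in grade $0$; and every homogeneous unit of $\ggm$ of grade in $\gm^0$ is the initial coefficient of some polynomial of degree $<m$. These force $\pi^{\ell_0}R(Q)(\xi)$ to be prime (up to units) in exactly two scenarios: either (i) $\ell_0=1$ and $R(Q)=1$, giving $(\inm Q)=(\inm\varphi)$, or (ii) $\ell_0=0$ and $R(Q)$ is irreducible in $\kappa[y]$.

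Next I would read off $\deg Q$ from the minimal-degree condition. In case (i), the ideal contains $\inm\varphi$ (degree $m$), forcing $\deg Q\le m$; combined with $\deg Q\ge m$ this gives $\deg Q=m$, and monicity then forces $Q=\varphi+a_0$ with $\mu(a_0)>\mu(\varphi)$, whence $\inm Q=\inm\varphi$. In case (ii), $\ell=de$ and $\deg Q=\deg a_{de}+dem$; to show $\deg a_{de}=0$ I would construct the auxiliary polynomial
\[
f \;=\; \varphi^{de}+\sum_{j=0}^{d-1}b_{je}\,\varphi^{je},\qquad b_{je}\in\kx,\ \deg b_{je}<m,
\]
with each $b_{je}$ chosen so that $\inm b_{je}=\zeta_j(Q)\,\bar{u}^{d-j}$; this is possible because the target is a homogeneous unit of grade $(d-j)\mu(u)\in\gm^0$. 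Then $\deg f=dem$ and $R(f)=R(Q)$ by construction, so $\inm f\in(\inm Q)\ggm$, and the KP condition forces $\deg Q\le dem$.

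The converse runs the arguments backwards. For case (ii), any $f$ with $\deg f<dem$ satisfies $\ell(f)<de$, hence $\deg R(f)<d$, so the irreducible polynomial $R(Q)$ of degree $d$ cannot divide $R(f)$ and $\inm f\notin(\inm Q)\ggm$; case (i) is easier, since $\deg f<m$ forces the trivial $\varphi$-expansion $f=a_0$ and hence $\inm f=\bar{a}_0$ is a unit, not in $(\pi)\ggm$. The main technical hurdle is the representation result used in case (ii) --- that every homogeneous unit of $\ggm$ of grade in $\gm^0$ is realized as $\inm b$ for some $b\in\kx$ with $\deg b<m$ --- which is where the specific structure of $\ggm$ as a graded algebra over $\dm=\kappa[\xi]$ really enters.
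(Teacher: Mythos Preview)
The paper does not give its own proof of this theorem; it is quoted from \cite[Propositions~6.3 and~6.6]{KP}. Your plan is essentially the argument carried out there and is correct.

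The one step worth spelling out more carefully is the passage from ``$R(Q)$ irreducible in $\kappa[y]$'' to ``$R(Q)(\xi)$ generates a homogeneous prime ideal of $\ggm$'' (not merely of $\dm=\kappa[\xi]$). This follows once you observe that the decomposition of Theorem~\ref{Hmug} is intrinsic to the homogeneous element: if $\inm g=\inm g'$ then $S(g)=S(g')$, hence $\ell_0(g)=\ell_0(g')$ and $R(g)=R(g')$. Thus every nonzero homogeneous element of $\ggm$ has a unique expression $\epsilon\,\pi^{a}\,P(\xi)$ with $\epsilon$ a homogeneous unit, $a\ge0$, and $P\in\kappa[y]$ monic with $P(0)\ne0$. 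In particular $R$ is multiplicative and $\ell_0$ is additive, so $\ggm$ behaves, up to units, like $\kappa[\xi][\pi]$; primality of an irreducible $P(\xi)$ and of $\pi$, and the failure of any mixed product $\pi^{\ell_0}P(\xi)$ with $\ell_0\ge1$, $\deg P\ge1$ to be prime, are then immediate. With this structural point made explicit, your degree bookkeeping --- the comparison polynomial $f=\varphi^{de}+\sum_j b_{je}\varphi^{je}$ forcing $\deg Q\le dem$, and the bound $\ell(f)\le N(f)<de$ showing no $\inm f$ with $\deg f<dem$ lies in $(\inm Q)$ --- goes through exactly as you describe.
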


Now, let us see how the residual polynomial operator $R$  can  be used to determine relative residual degrees.

Suppose that $\mu<\nu$ for some $\nu\in\ttt(\g)$.
There is a natural homomorphism of graded algebras $\ggm\to\ggn$, defined as follows on homogeneous elements: 
$$\inm g\longmapsto\begin{cases}\inu g,& \mbox{ if }\mu(g)=\nu(g),\\ 0,& \mbox{  if }\mu(g)<\nu(g).\end{cases}
$$
The  image of this homomorphism is contained in the subalgebra $\ggn^0\subset\ggn$ generated by all homogeneous units \cite[Corollary 2.6]{MLV}. Restricted to the homogeneous part of grade zero, this homomorphism induces ring homomorphisms
\[
\dm\lra \Delta_\nu,\qquad \ka(\mu)\to\ka(\nu).
\]

\begin{Cor}\label{root}
	Suppose that $\mu(\varphi)=\nu(\varphi)$ and $\mu(g)<\nu(g)$. Then, the element $z:=\left(\inu\varphi^e\right)\left(\inu u\right)^{-1}\in \ka(\nu)$ is a root of $R_{\mu,\varphi,u}(g)\in\ka(\mu)[y]$. 
\end{Cor}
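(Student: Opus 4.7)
The plan is to apply the natural graded-algebra homomorphism $\ggm\to\ggn$ (described in the paragraph just before the corollary) to the fundamental identity from Theorem~\ref{Hmug},
\[
\inm g \;=\; \bar a_\ell\,\bar u^{\,d}\,(\inm\varphi)^{\ell_0}\,R(g)(\xi).
\]
Because $\deg a_\ell<\deg\varphi$, the element $\bar a_\ell$ is a homogeneous unit of $\ggm$, so I may rewrite this identity in $\ggm$ as
\[
\bar a_\ell^{-1}\,\inm g \;=\; \bar u^{\,d}\,(\inm\varphi)^{\ell_0}\,R(g)(\xi).
\]
The point of this preliminary rewrite is that multiplying by $\bar a_\ell^{-1}$ lets me cleanly track the image of the left-hand side without having to worry about whether $\bar a_\ell$ itself maps to a nonzero element.

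Next I would transport the identity through the homomorphism. The hypothesis $\mu(g)<\nu(g)$ sends $\inm g$ to $0$, so the left-hand side vanishes. On the right, the hypothesis $\mu(\varphi)=\nu(\varphi)$ gives $\inm\varphi\mapsto\inu\varphi$, which is nonzero because $\nu(\varphi)<\infty$. Since $\bar u\in\ggm$ is a unit and any ring homomorphism preserves units, the image of $\bar u$ must be nonzero in $\ggn$; equivalently $\mu(u)=\nu(u)$, the image equals $\inu u$, and $\bar u^{-1}\mapsto(\inu u)^{-1}$. Hence $\xi=(\inm\varphi^e)\bar u^{-1}$ is sent to $(\inu\varphi)^e(\inu u)^{-1}=z$, and $R(g)(\xi)\in\dm$ is sent to $\overline{R(g)}(z)\in\Delta_\nu$, where $\overline{R(g)}\in\ka(\nu)[y]$ is the polynomial obtained by applying the induced homomorphism $\ka(\mu)\to\ka(\nu)$ coefficient by coefficient to $R(g)$.

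Putting everything together, the mapped identity becomes
\[
0 \;=\; (\inu u)^{d}\,(\inu\varphi)^{\ell_0}\,\overline{R(g)}(z)
\]
in $\ggn$. Since $\ggn$ is an integral domain and both $\inu u$ and $\inu\varphi$ are nonzero, cancelling yields $\overline{R(g)}(z)=0$ in $\ka(\nu)$, which is exactly the claim. The only step that needs genuine thought, as opposed to routine bookkeeping, is the preservation-of-units argument that simultaneously forces $\mu(u)=\nu(u)$ and identifies the image of $\xi$ with $z$; once that is in hand, the rest follows mechanically from the ring homomorphism property.
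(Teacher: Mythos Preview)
Your proof is correct and follows essentially the same route as the paper: apply the graded-algebra homomorphism $\ggm\to\ggn$ to the identity of Theorem~\ref{Hmug}, use $\mu(g)<\nu(g)$ to kill the left side, use $\mu(\varphi)=\nu(\varphi)$ and the preservation of units to identify the image of $\xi$ with $z$, and cancel the nonzero factors in the integral domain $\ggn$. Your preliminary rewrite dividing by $\bar a_\ell$ is harmless but unnecessary, since the same units-preservation argument you invoke for $\bar u$ applies equally to $\bar a_\ell$.
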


\begin{proof}
This follows immediately from Theorem \ref{Hmug}. The image of $\inm g$ in $\ggn$ is zero and the image of $\xi=\left(\inm\varphi^e\right)\left(\inm u\right)^{-1}$ is  equal to $z$. Since $z$ is a nonzero homogeneous element of grade zero and belongs to the image of $\ggm\to\ggn$, it must be a unit in $\Delta_\nu$; hence, it belongs to $\ka(\nu)$.  

Finally, the image of the homogeneous elements $\inm\varphi$, $\bar{a}_\ell$ and $\bar{u}$ are homogeneous units in $\ggn$ too. Hence, by taking the image in $\ggn$ of all elements involved in the identity (\ref{Rg}), we deduce that $R(g)(z)=0$.
\end{proof}\e

Now, consider the special case of an ordinary augmentation $\mu\to\nu$. That is,
\[
\nu=[\mu;\,\phi,\ga], \quad \mbox{ for some }\ \phi\in\kpm,\  \ga\in\gi, \ \ga>\mu(\phi). 
\]
By definition, $\nu$ acts on $\phi$-expansions as follows:
\[
g=\sum\nolimits_{0\le n}a_n \phi^n,\ \ \deg(a_n)<\deg(\phi)\ \imp\ \nu(g)=\min_{0\le n}\left\{\mu(a_n)+n\ga\right\}.
\]

\begin{Cor}\label{relDeg}
Suppose that $\mu\to\nu$ is an ordinary augmentation. If $\mu(\varphi)=\nu(\varphi)$, then $\ka(\nu)/\ka(\mu)$ is a simple extension, generated by $z$.  In particular, 
\[
[\ka(\nu)\colon \ka(\mu)]=\deg\left(R_{\mu,\varphi,u}(\phi)\right)=\dfrac{\deg(\phi)}{e\deg(\varphi)}.
\] 
\end{Cor}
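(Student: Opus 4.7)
The plan is to use Corollary~\ref{root} and Theorem~\ref{charKP} to exhibit $z$ as a root in $\ka(\nu)$ of a monic irreducible polynomial in $\ka(\mu)[y]$ of degree $\deg(\phi)/(em)$, and then to close the resulting containment via a short graded-algebra argument.

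First I would check that the hypothesis $\mu(\varphi)=\nu(\varphi)$ forces $\deg(\phi)>m$, where $m=\deg(\varphi)$. Otherwise, Theorem~\ref{charKP} would give $\inm\phi=\inm\varphi$, so $\phi-\varphi$ has degree $<m$ and $\mu(\phi-\varphi)>\mu(\varphi)$; since $\nu$ agrees with $\mu$ on polynomials of degree $<\deg(\phi)=m$, one then computes $\nu(\varphi)=\nu(\phi-(\phi-\varphi))\geq\min\{\gamma,\mu(\phi-\varphi)\}>\mu(\varphi)$, a contradiction. Granted $\deg(\phi)>m$, the second case of Theorem~\ref{charKP} says that $R(\phi):=R_{\mu,\varphi,u}(\phi)$ is monic, irreducible in $\ka(\mu)[y]$, of degree $\deg(\phi)/(em)$. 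Corollary~\ref{root} applied to $g=\phi$ (valid since $\mu(\phi)<\gamma=\nu(\phi)$ and $\mu(\varphi)=\nu(\varphi)$) then yields $z\in\ka(\nu)$ with $R(\phi)(z)=0$. Consequently $R(\phi)$ is the minimal polynomial of $z$ over $\ka(\mu)$, and $\ka(\mu)(z)\subseteq\ka(\nu)$ is a subfield of degree $\deg(\phi)/(em)$.

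For the reverse inclusion I would pass to the graded algebras. By Theorem~\ref{Hmug}, $\inm\phi$ factors as $U\cdot R(\phi)(\xi)$, where $U=\bar a_\ell\,\bar u^d\,(\inm\varphi)^{\ell_0}$ is a homogeneous unit of $\ggm$ whose image in $\ggn$ remains a unit: the factor $\inm\varphi$ maps to $\inu\varphi\neq 0$ thanks to $\mu(\varphi)=\nu(\varphi)$, while $\bar a_\ell$ and $\bar u$ are initial coefficients of polynomials of degree strictly less than $m<\deg(\phi)$ and hence are unaffected by the augmentation. Since $\mu(\phi)<\nu(\phi)$ forces $\inm\phi$ to vanish in $\ggn$, cancelling $U$ yields that the degree-zero element $R(\phi)(\xi)\in\dm$ maps to zero in $\Delta_\nu$. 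The induced ring map $\dm\to\Delta_\nu$ therefore factors through the field $\dm/(R(\phi)(\xi))\simeq\ka(\mu)(z)$ (a field since $R(\phi)$ is irreducible), with $\xi\mapsto z$. Identifying this image with the degree-zero part $(\ggn^0)_0$ of the subalgebra of $\ggn$ generated by homogeneous units --- which is the content of the standard MLV identification $\ker(\ggm\to\ggn)=(\inm\phi)\ggm$ --- and observing that every nonzero $\alpha\in\ka(\nu)$ is a unit in $\Delta_\nu$, hence a homogeneous unit of $\ggn$ of grade zero lying in $(\ggn^0)_0$, I would conclude $\ka(\nu)\subseteq(\ggn^0)_0=\ka(\mu)(z)$. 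Combined with the reverse inclusion already established, this gives the desired equality.

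The main obstacle is the identification $(\ggn^0)_0=\ka(\mu)(z)$, which rests on the MLV kernel computation $\ker(\ggm\to\ggn)=(\inm\phi)\ggm$ (standard in \cite{KP,MLV}); the remaining steps are direct applications of Theorems~\ref{Hmug} and~\ref{charKP} together with Corollary~\ref{root}.
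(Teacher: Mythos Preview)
Your argument is correct and follows essentially the same route as the paper: both use Corollary~\ref{root} and Theorem~\ref{charKP} to identify $R_{\mu,\varphi,u}(\phi)$ as the minimal polynomial of $z$ over $\ka(\mu)$, and both close the inclusion $\ka(\mu)(z)\subseteq\ka(\nu)$ via the graded-algebra map $\ggm\to\ggn$. The one point where the paper is cleaner is the reverse inclusion: you invoke the kernel identity $\ker(\ggm\to\ggn)=(\inm\phi)\ggm$, but knowing the kernel does not by itself give the image; what you actually need is that $\im(\ggm\to\ggn)=\ggn^0$, and the paper obtains this directly by observing that $\phi$ becomes a key polynomial of \emph{minimal} degree for $\nu$ \cite[Corollary~7.3]{KP}, so every homogeneous unit of $\ggn$ is $\inu a$ for some $a$ with $\deg a<\deg\phi$, hence equals $\inm a$ and lies in the image. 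With that in place, $\im(\dm\to\Delta_\nu)=\ka(\nu)$ follows immediately, and since $\dm=\ka(\mu)[\xi]$ the image is $\ka(\mu)(z)$. Your added verification that $\mu(\varphi)=\nu(\varphi)$ forces $\deg(\phi)>\deg(\varphi)$ is a useful remark the paper leaves implicit.
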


\begin{proof}
The polynomial $\phi$ becomes a key polynomial of minimal degree for $\nu$ \cite[Corollary 7.3]{KP}. Thus, a homogeneous unit in $\ggn$ is equal to $\inu a$ for some $a\in\kx$ such that $\deg(a)<\deg(\phi)$. By the definition of the augmentation, $\inu a$ is the image of $\inm a$ under the canonical homomorphism $\ggm\to\ggn$. Hence, in this case, the image of $\ggm$ is the whole subalgebra $\ggn^0$.

In particular, restricted to homogeneous elements of grade zero, the homomorphism $\ggm\to\ggn$ induces a ring homomorphism $\dm\to\Delta_\nu$ such that 
\[
\im\left(\dm\lra \Delta_\nu\right)=\ka(\nu).
\]
Since $\dm=\ka(\mu)[\xi]$, we see that $\ka(\nu)/\ka(\mu)$ is a simple extension, generated by the image of $\xi$ in $\ggn$, which is $z$. Now, since $\phi\in\kpm$, Theorem \ref{charKP} shows that $R_{\mu,\varphi,u}(\phi)\in\ka(\mu)[y]$ is an irreducible polynomial of degree $\deg(\phi)/e\deg(\varphi)$. Thus, the computation of $[\ka(\nu)\colon \ka(\mu)]$ follows from Corollary \ref{root}.
\end{proof}\e


\section{Depth of defectless extensions}

\subsection{Generalization of Ore's result}\label{subsecD<=2}

\begin{Teo}\label{NewOre}
For a  Henselian valued field $(K,v)$, let $L/K$ be a finite simple field extension such that $(L/K,v)$ is defectless, $vL/vK$ is a cyclic group  and
 $Lv/Kv$ is a separable extension.
Then, $\dep(L/K,v)\le 2$.
\end{Teo}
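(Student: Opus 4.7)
Proof proposal. The approach is to construct an explicit generator $\theta$ of $L/K$ and exhibit an MLV chain for $v_\theta$ of length at most $2$, whence $\dep(L/K,v)\le 2$ follows by definition.

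\textbf{Setup and generator.} Let $f=[Lv:Kv]$ and $e=[vL:vK]$, so $[L:K]=ef$ by defectlessness. Because $Lv/Kv$ is finite separable, pick $\zeta\in\mathcal{O}_L$ lifting a primitive element of $Lv/Kv$; its minimal polynomial $h\in\mathcal{O}_K[x]$ has degree $f$, and $\bar h\in Kv[y]$ is irreducible and separable. Henselian uniqueness identifies $F:=K(\zeta)$ with the maximal unramified subextension of $L/K$. Since $vL/vK$ is cyclic of order $e$, choose $\varpi\in L^*$ with $\gamma:=v(\varpi)$ generating $vL/vK$. Set $\theta:=\zeta+\varpi$ (the cases $e=1$ or $f=1$ are trivial and give a chain of length at most $1$; assume henceforth $e,f>1$). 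Since $\bar\theta=\bar\zeta$, the residue field of $K(\theta)$ contains $Lv$, and Henselian uniqueness of the unramified lift forces $F\subseteq K(\theta)$; therefore $K(\theta)=F(\theta-\zeta)=F(\varpi)=L$.

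\textbf{The candidate MLV chain.} Consider
\[
v\ \to\ \mu_0\ \to\ \mu_1\ \to\ \mu_2=v_\theta,
\]
where $\mu_0$ is the Gauss extension with $\phi_0=x$, $\gamma_0=0$; $\mu_1=[\mu_0;h,\gamma]$ uses $\phi_1=h$, $\gamma_1=\gamma$; and $\mu_2=[\mu_1;g,\infty]$ uses $\phi_2=g$, the minimal polynomial of $\theta$ over $K$, of degree $ef$. The strict degree condition $1<f<ef$ is built in. That $h\in\kp(\mu_0)$ is immediate from Theorem \ref{charKP} and the irreducibility of $\bar h$.

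\textbf{Verification.} The heart of the proof is the identification $v_\theta=\mu_1$ on $K[x]_{<ef}$. Taylor expansion gives $h(\theta)=h'(\zeta)\varpi+O(\varpi^2)$, and separability of $\bar h$ yields $v(h'(\zeta))=0$, so $v(h(\theta))=\gamma$. For $a\in K[x]$ with $\deg a<f$, after normalizing by $\mu_0(a)$ we obtain $\bar a\in Kv[y]$ nonzero of degree less than $\deg\bar h$, whence $\bar a(\bar\zeta)\ne 0$ in $Lv$ and $v(a(\theta))=\mu_0(a)$. Writing $q=\sum_{i=0}^{d}a_i h^i$ with $d<e$ and $\deg a_i<f$, each summand has value $v(a_i(\theta)h(\theta)^i)=\mu_0(a_i)+i\gamma$; these values lie in pairwise distinct cosets of $vK$ in $vL$ because $\gamma$ has order exactly $e$ in the cyclic quotient, so no cancellation occurs and $v_\theta(q)=\min_i(\mu_0(a_i)+i\gamma)=\mu_1(q)$. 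This identifies $v_\theta=[\mu_1;g,\infty]$. It remains to verify $g\in\kp(\mu_1)$: we have $e(\mu_1)=e$ and $\kappa(\mu_1)=Kv(\bar\zeta)=Lv$, while $\kappa(v_\theta)=Lv$ since $Lv/Kv$ is algebraic; then Corollary \ref{root} produces a root of $R(g)\in\kappa(\mu_1)[y]$ which already lies in $\kappa(\mu_1)$, so $\deg R(g)=1$ and Theorem \ref{charKP} gives $g\in\kp(\mu_1)$. The displayed chain is therefore an MLV chain of length $2$, so $\dep(\theta)\le 2$ and $\dep(L/K,v)\le 2$.

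\textbf{Main obstacle.} The critical step is the no-cancellation argument in the $h$-expansion of $q(\theta)$: the cyclicity hypothesis on $vL/vK$ is precisely what ensures the $e$ cosets $i\gamma+vK$ ($0\le i<e$) are distinct, forcing valuations of different summands to be incomparable. Without cyclicity, coincidences among the cosets could produce cancellations lowering $v(q(\theta))$ below $\mu_1(q)$, which would necessitate an additional key-polynomial augmentation and push the depth beyond $2$. Everything else (the Henselian lift of $F$, the residue-field computation $\kappa(\mu_1)=Lv$, the degree bound for $R(g)$) is routine given the separability hypothesis on $Lv/Kv$.
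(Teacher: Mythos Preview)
Your argument is correct and follows the paper's route almost exactly: build $\theta=\zeta+\varpi$ (the paper writes $z+\alpha$ and explicitly takes $v(\alpha)>0$, which you also need so that $\bar\theta=\bar\zeta$ and $\mu_0\le v_\theta$), use the Taylor/Hasse--Schmidt expansion together with separability of $\bar h$ to get $v(h(\theta))=\gamma$, and exploit cyclicity of $vL/vK$ for the coset/no-cancellation computation.

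One small logical gap: the inference ``$R(g)$ has a root in $\kappa(\mu_1)$, so $\deg R(g)=1$'' is not valid on its own---a polynomial with a root in its coefficient field need not be linear. You also need $\deg R(g)\le 1$, which is immediate once you note that $\deg g=ef$ forces the $h$-expansion of $g$ to end at index $e$, so $\ell-\ell_0\le e$ and hence $d\le 1$. The paper avoids this detour by applying the very same coset argument directly to the identity $g(\theta)=0$: since the values $v(a_i(\theta)h(\theta)^i)$ lie in distinct cosets except for $i=0$ and $i=e$, the vanishing forces $S=\{0,e\}$ and $R_{\mu_1,h,u}(g)=\zeta_0+y$ explicitly.
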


\begin{proof}
	Obviously, we may assume that $[L\colon K]=e(L/K)f(L/K)>1$. Let $\oo_v\sub K$ be the valuation ring of $v$ over $K$.
	
	Let $\bar{\phi}\in K\!v[x]$ be monic, irreducible, such that $Lv=K\!v\left(\bar{z}\right)$ for some root $\bar{z}$ of $\bar{\phi}$.   Let $\phi\in\oo_v[x]\sub\kx$ be any monic lifting of $\bar{\phi}$. Clearly, \[\deg(\phi)=\deg(\bar{\phi})=f(L/K).\]
	By the Henselian property, there exists some root $z\in L$ of $\phi$, such that $v(z)=0$ and the image of $z$ in $Lv$ is $\bar{z}$. 
	
	Take $\al\in L$ such that $v(\al)>0$ and $v(\al)+vK$ generates the quotient group $vL/vK$. In other words, $e:=e(L/K)$ is the least positive integer such that $ev(\al)\in vK$.

	We shall show that $\t:=z+\al$ is a generator of $L/K$ such that $\dep(\t)\le2$.
	
Consider Taylor's expansion:
\[
\phi(\t)=\phi(z+\al)=\sum_{n\ge0}\partial_n\phi(z)\al^n,
\]
where $\partial_n$ are the linear operators of  Hasse-Schmidt. Since $\phi(z)=0$, $v(\phi'(z))=0$ (by the separability of $Lv/Kv$) and $v\left(\partial_n\phi(z)\right)\ge0$ for all $n>1$, we deduce that $v(\phi(\t))=v(\al)$.

Consider $\mu_0=[v;\,x,0]$, which is Gauss' extension of $v$ to $\kx$. Since $v(\t)=0$, we have  $\mu_0< v_\t$. Since $\bar{\phi}$ is irreducible, Theorem \ref{charKP} shows that $\phi\in\kp(\mu_0)$. Hence, we may consider the augmentation 
\[
	v\;\stackrel{x,0}\lra\;\mu_0\;\stackrel{\phi,\ga}\lra\;\mu_1,
\]
where $\ga=v(\al)=v(\phi(\t))$. Obviously, $\mu_1\le v_\t$, and this implies $\mu_1<v_\t$, because $\mu_1$ has trivial support. 
Now, $\phi$ becomes a key polynomial for $\mu_1$ of minimal degree
\cite[Corollary 7.3]{KP}. By the definition of the augmentation,  we have  $\mu_1(a)=\mu_0(a)\in \g_{\mu_0}=vK$ for all $a\in\kx$ such that $\deg(a)<\deg(\phi)$. Hence,
\[
\g_{\mu_1}^0=\{\mu_1(a)\mid a\in\kx, \ 0\le\deg(a)<\deg(\phi)\}=vK,
\]
and $e(\mu_1)=e$ because it is the least  positive integer such that $e\mu_1(\phi)=e\ga\in vK$. 

Let $g\in\kx$ be the minimal polynomial of $\t$ over $K$.
Consider the $\phi$-expansion:
\[
g=a_0+a_1\phi+\cdots +a_\ell\phi^\ell,\quad a_i\in\kx,\ \deg(a_i)<\deg(\phi), \ 0\le i\le \ell.
\]
From
\[
0=g(\t)=a_0(\t)+a_1(\t)\phi(\t)+\cdots +a_\ell(\t)\phi(\t)^\ell
\]
we deduce that $\ell=e$ and $a_\ell=1$. Indeed, for all $i$, we have
\[
\mu_0(a_i)=\mu_1(a_i)\ \imp\ \mu_0(a_i)=\mu_1(a_i)=v_\t(a_i)=v(a_i(\t)).
\]
Thus, $v(a_i(\t))\in vK$, so that 
\begin{equation}\label{values}
v(a_i(\t)\phi(\t)^i)= v(a_i(\t))+i\ga\in i\ga+vK.
\end{equation}
 
If $\ell<e$, then the $v$-value of all terms $a_i(\t)\phi(\t)^i$ would belong to differents classes of $vL/vK$. This contradicts the fact that the sum of all these terms is zero. Once we know that $\ell\ge e$, we deduce that $\ell=e$ and $a_\ell=1$, because $[L\colon K]=e\deg(\phi)$. 

This shows that $\t$ is a generator of $L/K$. Finally, let us check that $g$ is a key polynomial for $\mu_1$. 

The above argument shows that the only possible coincidence 
of two of the values displayed in (\ref{values}) is: \,$v(a_0(\t))=v(\phi(\t)^e)=e\ga$.
 
Take any  $u\in\kx$ such that $\deg(u)<\deg(\phi)$ and 
$v(u(\t))=\mu_1(u)=e\ga$. 
By definition, we have
\[
R_{\mu_1,\phi,u}(g)=\zeta_0+y,\qquad \zeta_0=\bar{a}_0\bar{u}^{-1}.
\]
Since this polynomial is irreducible, Theorem \ref{charKP} shows that $g\in\kp(\mu_1)$. Hence, we can consider the augmentation $\mu_2=[\mu_1;\,g,\infty]\le v_\t$. Since $\mu_2$ has nontrivial support, it is a leaf of $\ttt(\g)$ by Theorem \ref{existence}. Hence, $\mu_2=v_\t$ and we get a chain of ordinary augmentations:
\[
v\;\stackrel{x,0}\lra\;\mu_0\;\stackrel{\phi,\ga}\lra\;\mu_1\;\stackrel{g,\infty}\lra\;v_\t.
\]
If $\deg(\phi)>1$, this is a MLV chain, so that $\dep(\t)=2$.
If $\deg(\phi)=1$, then $\mu_1$ is a depth-zero valuation and the MLV chain of $v_\t$ would be:
\[
v\;\stackrel{\phi,\ga}\lra\;\mu_1\;\stackrel{g,\infty}\lra\;v_\t.
\]
Thus, in this case, $\dep(\t)=1$.
\end{proof}\e


The most general statement in this vein would be the following one. 

\begin{Con}\label{OreGen}
 For a  Henselian valued field $(K,v)$, let $L/K$ be a finite simple field extension such that $(L/K,v)$ is defectless. Let  $r,s$ be the minimal number of generators of $Lv/Kv$,  $vL/vK$, respectively.
Then, 
\[
\max\{r,s\}\le \dep(L/K,v)\le r+s.
\]
\end{Con}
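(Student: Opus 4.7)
I would split the plan into a lower bound and an upper bound, which call for rather different techniques.

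\emph{Lower bound.} For any generator $\theta$ of $L/K$ with a MLV chain $v \to \mu_0 \to \cdots \to \mu_d = v_\theta$ of length $d = \dep(\theta)$, the multiplicativity formula $f(L/K) = \prod_{i=0}^{d-1} f(\mu_i \to \mu_{i+1})$ (recalled in the Introduction), combined with Corollary~\ref{relDeg} realising each $f(\mu_i \to \mu_{i+1})$ as the degree of a simple residue-field extension $\kappa(\mu_{i+1})/\kappa(\mu_i)$, builds $Lv/Kv$ as a tower of at most $d$ simple extensions; hence $r \le d$. The analogous multiplicativity for $e$, together with the description $e(\mu) = (\Gamma_\mu : \Gamma_\mu^0)$ from Section~\ref{subsecOre} and the fact that each augmentation $\mu_{i-1} \to \mu_i$ enlarges the value group only by adjoining the single element $\gamma_i = \mu_i(\phi_i)$, produces a filtration of $vL/vK$ with cyclic successive quotients, forcing $s \le d$. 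Minimising over $\theta$ gives $\max\{r,s\} \le \dep(L/K,v)$.

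\emph{Upper bound.} The plan is to iterate the construction of Theorem~\ref{NewOre}. Pick a tower $Kv \subsetneq F_1 \subsetneq \cdots \subsetneq F_r = Lv$ with $F_i = F_{i-1}(\bar{z}_i)$ simple, lift each $\bar{z}_i$ to some $z_i \in L$ with $v(z_i)=0$ via the Henselian property, and choose $\alpha_1,\dots,\alpha_s \in L$ whose values generate successive cyclic quotients of $vL/vK$. Set
\[
\theta = z_1 + z_2 + \cdots + z_r + \alpha_1 + \alpha_2 + \cdots + \alpha_s
\]
(after small corrections ensuring $K(\theta) = L$ and that the summands have sufficiently distinct $v$-values), and construct a MLV chain for $v_\theta$ iteratively: start with the Gauss valuation $\mu_0 = [v;\,x,0]$; then perform $r$ successive ordinary augmentations whose key polynomials $\phi_1,\dots,\phi_r$ are (up to twisting) minimal polynomials over $K$ of partial sums of the $z_j$, each new augmentation contributing one residue-field generator via Theorem~\ref{charKP} and Corollary~\ref{relDeg}; next perform $s$ further ordinary augmentations progressively attaching the ramification values $v(\alpha_j)$; finally terminate with $\phi_{r+s} = g$ (the minimal polynomial of $\theta$) and value $\infty$. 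Provided the key-polynomial degrees increase strictly, this produces a MLV chain of length $r+s$.

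\emph{Main obstacle.} The hard part is the upper bound. One must verify that each proposed $\phi_i$ is indeed a key polynomial for $\mu_{i-1}$ in the sense of Definition~\ref{keypolyno}, that successive augmentations stay below $v_\theta$, and that $\theta$ actually generates $L/K$. This becomes delicate when $Lv/Kv$ has inseparable pieces (Hensel lifting of the $\bar{z}_i$ is no longer canonical and the choice of $z_i$ interacts with later key polynomials), and when the ramification generators $v(\alpha_j)$ do not sit independently over the residue data already built up (so that one must ensure the chosen value $\gamma$ lies in the correct coset of $\Gamma_\mu^0$ at each augmentation). A cleaner fallback is to induct on $r+s$ via an intermediate simple subextension $K \subset M \subset L$ that reduces either $r$ or $s$ by one, but the existence of such $M$ when $Lv/Kv$ is purely inseparable or $vL/vK$ cannot be split is itself nontrivial; one may have to argue directly at the level of Okutsu sequences, using Theorem~\ref{OSdepth} to control depth without passing through intermediate fields.
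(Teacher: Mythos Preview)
The statement you are attempting to prove is labelled as a \emph{Conjecture} in the paper (Conjecture~\ref{OreGen}); the paper does \emph{not} provide a proof. The authors present it as an open problem and explicitly remark that even the special case where $vL/vK$ is cyclic and $Lv/Kv$ is simple but possibly inseparable---which would strengthen Theorem~\ref{NewOre} by dropping the separability hypothesis---remains unsettled.

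Your lower-bound sketch is essentially the argument the paper itself uses, in the example of Section~3.2, to show that a particular extension has depth at least three. One point you should make explicit: Corollary~\ref{relDeg} is stated only for ordinary augmentations, so you need to know that in a defectless Henselian extension every step of a MLV chain is ordinary; this follows from the defect product formula recalled in the Introduction, since limit steps carry nontrivial defect. With that in hand, both the residue-field tower $Kv=\kappa(\mu_0)\subset\cdots\subset\kappa(\mu_d)=Lv$ and the value-group filtration $vK=\Gamma_{\mu_0}^0\subset\Gamma_{\mu_0}\subset\cdots\subset\Gamma_{\mu_{d-1}}=vL$ have at most $d=\dep(\theta)$ simple (resp.\ cyclic) steps, giving $\max\{r,s\}\le\dep(L/K,v)$.

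Your upper-bound sketch, on the other hand, is a plan rather than a proof, and you have correctly identified the obstruction that the authors regard as the heart of the open problem: when $Lv/Kv$ has inseparable pieces, Hensel lifting of residue generators is unavailable, so the iterative construction modelled on Theorem~\ref{NewOre} breaks down at the very first step. Your proposed fallbacks (induction through an intermediate simple subextension, or direct control via Okutsu sequences and Theorem~\ref{OSdepth}) are reasonable lines of attack, but they are not carried out here and the paper offers no further guidance. In short, you have located the gap but not closed it; the upper bound remains conjectural.
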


According to this conjecture, in Theorem \ref{NewOre}, we should be able to replace the condition ``$Lv/K\!v$ separable"  with the weaker condition ``$Lv/K\!v$ simple". The following example shows that this latter condition could not be dropped.\e

\subsection{Example of an extension of depth three}
Let $k$ be an algebraically closed field of characteristic $p>0$. Take \[K=k(q,r,s)(\!(t)\!),\] where $q,r,s,t$ are indeterminates. For the valuation $v=\ord_t$,  
the valued field $(K,v)$ is Henselian, with $vK=\Z$  \,and\, $K\!v=k(q,r,s)$. 

Let $L=K(\t)$ for some root $\t\in\kb$ of the polynomial 
\[
g=\left(\left(x^p-q\right)^p-t^pr\right)^p+t^{p^4}x-t^{p^3}s\in\kx.  
\]
The term $t^{p^4}x$ is used only to ensure that $g$ is separable. If we omit it, we get a similar example with $L/K$ purely inseparable.

It can be easily shown that $\dep(\t)=3$. A MLV chain for $v_\t$ is:
\[
v\;\stackrel{x,0}\lra\;\mu_0\;\stackrel{\phi,1}\lra\;\mu_1\;\stackrel{\varphi,p^2}\lra\;\mu_2\;\stackrel{g,\infty}\lra\;v_\t,
\]
where $\phi=x^p-q$, and $\varphi=\left(x^p-q\right)^p-t^pr$.

The residual polynomials of $g$ with respect to each intermediate valuation are:
\[
R_{\mu_0,x,1}(g)=\left(y^p-q\right)^{p^2},\qquad 
R_{\mu_1,\phi,t}(g)=\left(y^p-r\right)^p,\qquad 
R_{\mu_2,\varphi,t^{p^2}}(g)=y^p-s. 
\]
Since $y^p-s$ is irreducible in $K\!v[y]$, the last computation shows that $g\in\kp(\mu_2)$. In particular, $g$ is irreducible and $[L\colon K]=p^3$. On the other hand, Corollary \ref{root}
shows that $R_{\mu_0,x,1}(g)$, $R_{\mu_1,\phi,t}(g)$ and $ 
R_{\mu_2,\varphi,t^{p^2}}(g)$ have roots in $Lv$. Hence, $q^{1/p}$, $r^{1/p}$, $s^{1/p}$ belong to $Lv$ and $[Lv\colon K\!v]=p^3$. In particular, $e(L/K)=d(L/K)=1$.

Let us show that $\dep(L/K,v)=3$. For any other generator $\al$ of $L/K$ and any MLV chain of $v_\al$, Corollary \ref{relDeg} shows that in each augmentation $\mu_i\to\mu_{i+1}$ the extension $\ka(\mu_i)\to\ka(\mu_{i+1})$ is simple. Hence, the extension $Lv/K\!v$ is obtained as a tower of simple extensions. Since
\[
 Lv=K\!v\left(q^{1/p},r^{1/p},s^{1/p}\right),
 \]
 we need at least three simple extensions to construct $Lv$ from $K\!v$.

Similar examples can be constructed, with $\dep(L/K,v)$ arbitrarily large.

\subsection{Algebraic elements of depth one}\label{subsecDep1}

\begin{Teo}\label{dep1}
Let $(L/K,v)$ be a  finite, simple extension, which is unibranched and defectless. Then, the following conditions are equivalent.
\begin{enumerate}
\item [(a)] $\dep(L/K,v)=1$.
\item [(b)] There exists  $\al\in L$ such that $v(\al)+vK$ generates $vL/vK$ and $\left(\al^e/u\right)\!v$ generates $Lv/K\!v$, where $e=e(L/K)$ and $u\in K$ has $v(u)=ev(\al)$.  
\end{enumerate}
\end{Teo}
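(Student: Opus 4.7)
The plan is to prove both implications by exhibiting or decoding a one-step MLV chain
\[
v\;\longrightarrow\;\mu_0 = [v;\,\phi_0,\,\gamma_0]\;\longrightarrow\;\mu_1 = [\mu_0;\,g,\,\infty] = v_\theta,
\]
with $\phi_0 = x - a$, $a \in K$, and to match condition (b) against the structure of the residual polynomial $R := R_{\mu_0,\phi_0,u}(g) \in Kv[y]$ furnished by Theorem~\ref{charKP}, Corollary~\ref{root} and Corollary~\ref{relDeg}. The bridging fact is that, on setting $\alpha = \theta - a$, Corollary~\ref{root} identifies $z := (\alpha^e/u)v \in Lv$ as a root of $R$, while Theorem~\ref{charKP} makes $R$ irreducible of degree $\deg(g)/(e\deg \phi_0) = f = [Lv:Kv]$ whenever $g \in \kp(\mu_0)$.

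For (b)\,$\Rightarrow$\,(a), I would first check that $\alpha$ generates $L$ over $K$. The hypothesis on $v(\alpha)$ forces $v(\alpha)+vK$ to have order $e$ in $\g/vK$, so $e(K(\alpha)/K) \ge e$; the hypothesis on $(\alpha^e/u)v$ gives $K(\alpha)v = Lv$, hence $f(K(\alpha)/K) = f$. Combined with the defectless hypothesis and multiplicativity of the defect, these force $[K(\alpha):K] = ef = [L:K]$, whence $L = K(\alpha)$. Now set $\theta = \alpha$ and $\mu_0 := [v;\,x,\,v(\alpha)]$. The unibranched hypothesis makes every $K$-conjugate of $\alpha$ have the same valuation $v(\alpha)$, so the Newton polygon of $g$ is the single segment from $(0,nv(\alpha))$ to $(n,0)$; in the notation of Section~\ref{subsecOre} this gives $\ell_0 = 0$, $\ell = n$, and $\deg R = n/e = f$. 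Corollary~\ref{root} applies (since $\mu_0(x) = v(\alpha) = v_\theta(x)$ and $\mu_0(g) < v_\theta(g) = \infty$) and shows $z$ is a root of $R$. Hypothesis (b) makes $z$ a generator of $Lv/Kv$ of degree $f$, so $R$ is the minimal polynomial of $z$ over $Kv$, in particular irreducible. Theorem~\ref{charKP} then yields $g \in \kp(\mu_0)$; Theorem~\ref{existence} makes $[\mu_0;\,g,\,\infty]$ a leaf of $\ttt(\g)$, forcing it to coincide with $v_\theta$. The strict inequality $1 = \deg x < \deg g = n$ makes this chain MLV, proving $\dep(\theta) = 1$.

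For (a)\,$\Rightarrow$\,(b), pick a generator $\theta$ of $L/K$ with $\dep(\theta) = 1$ and a MLV chain $v \to \mu_0 \to v_\theta$. Under the defectless Henselian hypothesis, the augmentation $\mu_0 \to v_\theta$ is ordinary: Theorem~\ref{OSdepth} reduces this to $D_1(\theta)$ having a maximum, whose failure would produce non-trivial defect in the factorisation $d(L/K) = \prod d(\mu_\ell \to \mu_{\ell+1}) = 1$. Writing $\phi_0 = x - a$, set $\alpha := \theta - a$. The identity $v_\theta = [\mu_0;\,g,\,\infty]$ makes $v_\theta$ and $\mu_0$ agree on polynomials of degree $< n$, giving $v(\alpha) = v_\theta(x-a) = \mu_0(\phi_0) = \gamma_0$. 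The formula $e(L/K) = e(\mu_0)$ for depth-one chains, together with $e(\mu_0)$ being the order of $\gamma_0 + vK$ in $\g/vK$, makes $v(\alpha) + vK$ generate the order-$e$ group $vL/vK$. Finally, $g \in \kp(\mu_0)$ by construction, so Theorem~\ref{charKP} makes $R \in Kv[y]$ irreducible of degree $f$; Corollary~\ref{root} identifies $(\alpha^e/u)v$ as a root; the matching degrees $f = \deg R = [Lv:Kv]$ force $(\alpha^e/u)v$ to generate $Lv/Kv$, giving (b).

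The main technical obstacle is the ordinary-augmentation step in (a)\,$\Rightarrow$\,(b); once it is established, both directions reduce to Newton-polygon and residual-polynomial bookkeeping from Section~\ref{subsecOre}. This obstacle is resolved through Theorem~\ref{OSdepth}, which turns ``ordinary'' into ``$\max D_1(\theta)$ exists'', combined with the classical fact that a pseudo-Cauchy approximation to $\theta$ by elements of $K$ lacking a maximum would force immediate, defect-producing behaviour incompatible with the defectless Henselian hypothesis.
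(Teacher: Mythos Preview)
Your approach is essentially the paper's: both directions hinge on matching the residual polynomial $R_{\mu_0,x-a,u}(g)$ against condition~(b) via Theorem~\ref{charKP} and Corollary~\ref{root}, and the element $\alpha=\theta-a$ plays exactly the same role. Two points deserve correction.

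First, you repeatedly invoke a ``Henselian hypothesis'', but the statement assumes only \emph{unibranched} and defectless. This matters: the identity $[L:K]=e(L/K)f(L/K)$ that you (and the paper) use follows from unibranched plus defectless, not from Henselianity, and your Newton-polygon claim that all $K$-conjugates of $\alpha$ share the same $v$-value is really the statement that $g$ stays irreducible over $K^h$, which is precisely the unibranched hypothesis.

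Second, the step you flag as the main obstacle---showing the augmentation $\mu_0\to v_\theta$ is ordinary---is handled too loosely. Your assertion that a limit augmentation ``would produce non-trivial defect in the factorisation $d(L/K)=\prod d(\mu_\ell\to\mu_{\ell+1})$'' is correct in spirit but not justified: you have not argued why the relative defect of a limit step must exceed~1, nor does the ``pseudo-Cauchy'' remark in your final paragraph supply a proof. The paper's argument is sharper and avoids this: if the step were limit, then \cite[Lemma~5.3]{MLV} gives $\kappa(\mu_0)=\kappa(v_\theta)$, hence $f(L/K)=1$; and non-existence of $\max D_1(\theta)$ forces $e(L/K)=1$ by \cite{B}. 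Unibranched plus defectless then gives $\deg(g)=e\cdot f\cdot d=1$, contradicting $\deg(g)>1$. You should replace your defect-factorisation sketch with this explicit $e=f=1$ argument.
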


\begin{proof}
	Suppose that $\dep(L/K,v)=1$. Let $\t\in L$ be a generator such that the valuation $\vt$ admits an MLV chain of length one:
\[
v\;\stackrel{x-a,\ga}\lra\;\mu_0\;\stackrel{g,\infty}\lra\;\vt,
\]
where $g$ is the minimal polynomial of $\t$ over $K$.
We claim that the augmentation $\mu_0\to \vt$ is necesssarily ordinary.
Indeed, suppose that it is a limit augmentation. By \cite[Lemma 5.3]{MLV}, we would have
\[
K\!v=\ka(\mu_0)=\ka(\vt)=Lv,
\]
so that $f(L/K)=1$. On the other hand, $\max\left(D_1(\t)\right)$ would not exist, by Theorem \ref{OSdepth}. This implies $e(L/K)=1$ \cite{B}. Since $(L/K,v)$ is unibranched, $g$ is irreducible over $\khx$, so that 
\[
\deg(g)=e(L/K)f(L/K)d(L/K)=1,
\]
because we are assuming that $d(L/K)=1$. Hence, $\t\in K$ and $\vt$ is a depth-zero valuation. This contradicts our assumption.

Therefore, $g\in\kp(\mu_0)$ and $\deg(g)>1$. Obviously,
\[
vL=\g_{\vt}=\g_{\mu_0}=\gen{vK,\ga}_\Z.
\]
Hence, $e:=e(L/K)$ is the least positive integer such that $e\ga\in vK$. Choose any $u\in K$ such that $v(u)=ev(\ga)$. 
 
The element $\al:=\t-a\in L$ satisfies the conditions of (b). Since $v(\al)=\vt(x-a)=\mu_0(x-a)=\ga$, we see that $v(\al)+vK$ generates the quotient group $vL/vK$. On the other hand, $\inn_{\vt}(x-a)^e \bar{u}^{-1}$ generates $\ka(\vt)/K\!v$  by Corollary \ref{relDeg}. By applying the natural isomorphism described in (\ref{natIso}), determined by $x\mapsto \t$, we see that $\left(\al^e/u\right)\!v$ generates $Lv/K\!v$.
This proves that (a) implies (b).

Let us show that (b) implies (a). Take $\al\in L$ satisfying the conditions of (b). Let $\ga=v(\al)$. By our assumption, $e:=e(L/K)$ is the least positive integer such that $ev(\ga)$ belongs to $vK$.  Consider the depth-zero valuation
\[
 v\;\stackrel{x,\ga}\lra\;\mu_0.
\]
Since $v(\al)=\ga$, we have $\mu_0<v_\al$.

Take any $u\in K$ such that $v(u)=ev(\ga)$. Let $g\in\kx$ be the minimal polynomial of $\al$ over $K$. Since
\[
\mu_0(x)=v(\al)=v_\al(x)\quad\mbox{ and }\quad  \mu_0(g)<v_\al(g)=\infty,
\]
Corollary \ref{root}  shows that $R_{\mu_0,x,u}(g)$ vanishes on 
$\left(\inn_{v_\al}x^e\right)/\bar{u}$. Through the isomorphism $\ka(v_\al)\simeq Lv$ given in (\ref{natIso}), this element corresponds to  
$\left(\al^e/u\right)\!v$. By our assumption, this element has degree $f(L/K)$ over $K\!v$. Hence, 
\[
\deg g\ge e(L/K) \deg\left(R_{\mu_0,x,u}(g)\right)\ge e(L/K)f(L/K)=[L\colon K].
\]
This proves that $\deg(g)=[L\colon K]$ and  $\deg\left(R_{\mu_0,x,u}(g)\right)=f(L/K)$. In particular,  
$\al$ generates $L/K$ and $R_{\mu_0,x,u}(g)$ is irreducible. By Theorem \ref{charKP}, $g$ is a key polynomial for $\mu_0$. Hence, it makes sense to consider the ordinary augmentation
\[
\mu_0 \;\stackrel{g,\infty}\lra\;\mu_1\le v_\al.
\]
Since $\mu_1$ has nontrivial support, it is a leaf of $\ttt(\g)$ by Theorem \ref{existence}. Hence, necessarily $\mu_1=v_\al$, and we get a MLV chain of $v_\al$: 
\[
 v\;\stackrel{x,\ga}\lra\;\mu_0\;\stackrel{g,\infty}\lra\;v_\al.
\]
This shows that $\dep(\al)=1$, so that $\dep(L/K,v)=1$.
\end{proof}\e

It is particularly easy to check condition (b) of Theorem \ref{dep1} in the extreme cases $[L\colon K]=e(L/K)$ or 
$[L\colon K]=f(L/K)$. 

\begin{Cor}
A finite simple extension $(L/K,v)$ has depth one in any of the following two situations.
\begin{enumerate}
 \item[(i)] $L/K$ is totally ramified and $vL/vK$ is a cyclic group.
 \item[(ii)] $[L\colon K]=f(L/K)$ and $Lv/K\!v$ is simple.
\end{enumerate}
\end{Cor}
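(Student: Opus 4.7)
The plan is to apply Theorem \ref{dep1} by producing, in each of the two situations, an element $\al\in L$ witnessing its condition (b). The first observation is that the defectless hypothesis of that theorem is automatic in both cases: in (i), the equality $[L\colon K]=e(L/K)$ forces $f(L/K)=d(L/K)=1$, and in (ii), the equality $[L\colon K]=f(L/K)$ forces $e(L/K)=d(L/K)=1$. The unibranched hypothesis is part of the standing setup (and, for (i), is built into the meaning of ``totally ramified''), so I would just invoke it.

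For situation (i), I would choose $\al\in L$ whose value $\ga=v(\al)$ generates the cyclic quotient $vL/vK$. Then $e:=e(L/K)=[L\colon K]$ is the exact order of $\ga+vK$, so $e\ga\in vK$ and one can pick $u\in K$ with $v(u)=e\ga$. The first half of condition (b) holds by choice of $\al$, and the second half is vacuous: since $f(L/K)=1$ we have $Lv=K\!v$, so the residue $(\al^e/u)v\in K\!v$ trivially generates $Lv/K\!v$.

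For situation (ii), the relation $vL=vK$ (coming from $e(L/K)=1$) makes the value-group half of condition (b) trivial for any $\al$. Using the hypothesis that $Lv/K\!v$ is simple, I would pick a generator $\bar{\be}$ of $Lv$ over $K\!v$ and lift it to some $\al\in L$ with $v(\al)=0$. Taking $u=1$ (so that $v(u)=0=e\,v(\al)$ since $e=1$), the quotient $\al^e/u=\al$ has residue $\bar{\be}$, which generates $Lv/K\!v$ by construction.

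In each case both halves of (b) are satisfied, and Theorem \ref{dep1} delivers $\dep(L/K,v)=1$. There is no real obstacle here; the only thing to watch is that, whenever one of $e(L/K)$ or $f(L/K)$ equals $1$, the corresponding half of condition (b) becomes vacuous and therefore requires no construction beyond a clean choice of $\al$ and $u$.
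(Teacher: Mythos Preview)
Your proposal is correct and follows exactly the approach the paper intends: the paper states this Corollary immediately after remarking that ``it is particularly easy to check condition (b) of Theorem \ref{dep1} in the extreme cases $[L\colon K]=e(L/K)$ or $[L\colon K]=f(L/K)$'' and gives no further proof. Your verification that the unibranched and defectless hypotheses are automatic (via $\deg(G)=e\,f\,d$ forcing $G=g$), together with the observation that one half of condition (b) becomes vacuous in each case, is precisely what is being left to the reader.
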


On the other hand, condition (b) implies that $\op{gr}_v(L)=\op{gr}_v(K)[\inv \al]$. This monogeneity condition is not sufficient to guarantee that   $\dep(L/K,v)=1$, as illustrated by the example in Section \ref{subsecD2}.

\subsection{A tame extension of depth two}\label{subsecD2}
Take the 2-adic field,  $K=\Q_2$, equipped with the 2-adic valuation $v=\ord_2$. Consider the extension $L=K(\al,\ep)$, where
\[
\al^3=2, \qquad \ep^3=1,\ \ep\ne 1.  
\]
The minimal polynomial of $\ep$ over $K$ is $x^2+x+1$, which is irreducible modulo 2. Hence, $K(\ep)/K$ is an unramified subextension of $L/K$. Since $v(\al)=1/3$, we have $[L\colon K(\ep)]=3$. Thus,
\[
[L\colon K]=6,\qquad f(L/K)=2,\qquad e(L/K)=3.
\]
Hence $L/K$ is \textbf{tame}. This means, by definition, that:
\[
Lv/K\!v \ \mbox{ separable},\qquad d(L/K)=1,\qquad 2\nmid e(L/K).    
\]
Let us show that $\dep(L/K,v)=2$, by checking that no generator of $L/K$ satisfies the condition (b) of Theorem \ref{dep1}.

Suppose that $\t$ is a generator of $L/K$ such that $v(\t)+vK$ generates $vL/vK$. We can express every $\t\in L$ in a unique way as:
\[
 \t=a+b\al+c\al^2,\quad a,b,c\in K(\ep).  
\]
The three values $v(a)$, $v(b\al)$, $v(c\al^2)$ are different, because they have different images in the quotient $vL/vK\simeq  \Z/3\Z$. Hence, 
\[
 v(\t)=\min\{v(a),\,v(b\al),\,v(c\al^2)\}.
\]
Since the class $v(\t)+vK$ generates $vL/vK$, either $v(\t)=v(b\al)$, or $v(\t)=v(c\al^2)$. Let us first assume that 
$v(\t)=v(b\al)$. Then,
\[
v(\t-b\al)=\min\{v(a),\,v(c\al^2)\}>v(\t).
\]
Let $b=2^mB$, for some $B\in K(\ep)$ such that $v(B)=0$. Then, $v(\t^3)=1+3m$ and we may take $u=2^{3m+1}$ as an element in  $K$  with the same value. Now,
\[
\left(\t^3/u\right)\!v=\left((b\al)^3/u\right)\!v=\left(B^3\right)\!v=\left(Bv\right)^3=1, 
\]
because $K(\ep)v$ is the finite field with 4 elements. Thus, $\t$ does not satisfy the condition  (b) of Theorem \ref{dep1}.

The discussion of the case $v(\t)=v(c\al^2)$ is completely analogous. 
This ends the proof that  $\dep(L/K,v)=2$. 

Nevertheless, the graded algebra $\op{gr}_v(L)$ is generated by a single element as a $\op{gr}_v(K)$-algebra. Indeed, take $\t=\al(\ep-1)$. Since $v(\t)=1/3$, the algebra $\op{gr}_v(K)[\inv \t]$ has the same group of grades as $\op{gr}_v(L)$. In order to check that $\op{gr}_v(K)[\inv \t]=\op{gr}_v(L)$  it suffices to check that the subalgebra contains the subfield $K(\ep)v$.  Now, 
\[
\t^3=2(\ep-1)^3=6(1+2\ep)\ \imp\ \left(\t^3-2\right)/4=1+3\ep. 
\]
Hence, $\left((\t^3-2)/4\right)\!v$ generates the field $K(\ep)v$ over $K\!v$.


\section{An example in the  non-Henselian case}

In this section, we study a concrete branched extension $(L/K,v)$  inspired in \cite[Example 8.6]{NN}. The generating polynomial $g\in\kx$ has  degree  four and splits completely in $\khx$. Thus, for each root $\t\in K^h$ of $g$, the valuation $v_\t$ has depth zero as a valuation on $\khx$. However, we shall see that its restriction to $\kx$ acquires depth two.
This suggests that the number $\dep(\t)$ has a relevant arithmetical meaning only for unibranched extensions.    

Consider the field $K=\Q(t)$ equipped with the $\ord_t$ valuation. Every $u\in K^*$ has an initial term 
$$
\op{in}(u):=\left(u\,t^{-\ord_t(u)}\right)(0)\in \Q^*.
$$

Take a prime number $p\equiv1\md4$ and let $\ord_p$ be the $p$-adic valuation.

Consider the following discrete rank-two valuation on $K$:
$$
v\colon K^*\lra \Z^2_{\op{lex}},\qquad v(u)=\left(\ord_t(u),\ord_p(\op{in}(u))\right).
$$ 



Let  $i\in\kb$ be a root  of the polynomial $x^2+1$. Consider its $p$-adic expansion
$$
i=i_0+i_1p^{\ell_1}+\cdots+ i_np^{\ell_n}+\cdots,
$$
with $0<i_n<p$ for all $n\ge0$. Denote the truncations of $i$ by
$$
a_n=i_0+i_1p^{\ell_1}+\cdots+ i_{n-1}p^{\ell_{n-1}}\in\Z.
$$

On the other hand, consider $\al\in\kb$ defined as
$$
\al=\sqrt{1+t}=1+(1/2)t+j_2t^2+\cdots +j_nt^n+\cdots,
$$
where $j_n\in\Q$ for all $n\ge0$. Denote the truncations of $\al$ by
$$
b_n=j_0+j_1t+\cdots+ j_{n-1}t^{n-1}\in K,
$$
where $j_0=1$ and $j_1=1/2$.

\begin{Obs}\label{rem1}
	Both, $i$ and $\al$ belong to $K^h$.	
\end{Obs}

Indeed, suppose $\be\in\kb$ satisfies $\be^2\in K$ and $v(\be-a)>v(a)$ for some $a\in K$. Then, for every $\sg$ in the decomposition group $\dd
=\op{Aut}(\kb/K^h)$ we must have $\sg(\be)=\be$, because the equality $\sg(\be)=-\be$ leads to a contradiction. Since $v\circ\sg=v$, we have:
\[
v(\be-a)=v(\sg(\be)-\sg(a))=v(-\be-a)=v(\be+a)=v(\be-a+2a)=v(a).
\]

Consider the extension 
\[
L=K(\t),\qquad \mbox{where}\quad \t=\al+i\in K^h.
\] 
The minimal polynomial of $\t$ over $K$ is: 
\[
g=x^4-2t\,x^2+(t+2)^2\in \kx. 
\]
We have $\op{Z}(g)=\left\{\pm \t, \pm\t'\right\}\sub K^h$, where $\t':=\al-i=(t+2)/\t$. This follows from:
\[
t+2=\al^2+1=(\al+i)(\al-i).
\]
Therefore, $L/K$ is a Galois extension. Also, since $\al+i$ and $\al-i$ both belong to $L$, we see that $i$ and $\al$ both belong to $L$. Thus, the Galois group of $L/K$ is $C_2\times C_2$ and the network of subextensions is 

\begin{center}
	\setlength{\unitlength}{4mm}
	\begin{picture}(12,9.4)
		\put(-0.2,4){\begin{footnotesize}$K(i)$\end{footnotesize}}\put(3.3,4){\begin{footnotesize}$K(i\al)$\end{footnotesize}}\put(7.2,4){\begin{footnotesize}$K(\al)$\end{footnotesize}}
		\put(4,-0.1){\begin{footnotesize}$K$\end{footnotesize}}\put(4,8){\begin{footnotesize}$L$\end{footnotesize}}
		\put(1,5){\line(1,1){2.7}}\put(4.3,5){\line(0,1){2.7}}\put(7.8,5){\line(-1,1){2.7}}
		\put(1,3.5){\line(1,-1){2.7}}\put(4.4,3.5){\line(0,-1){2.7}}\put(7.8,3.5){\line(-1,-1){2.7}}
	\end{picture}
\end{center}\bs

By Remark \ref{rem1}, $L\sub K^h$. Hence, \cite[Corollary 3.2]{NN} shows that $v$ has four immediate extensions to the quartic field $L$. The valuations on $\kx$ asociated to these extensions with respect to the choice of $g$ as a generator of $L/K$ are:
\[
\vt,\quad v_{-\t},\quad v_{\t'},\quad v_{-\t'}.
\]  
The four extensions of $v$ to $L$ are defectless. If $w$ denotes any one of this extensions, we have necessarily
\[
e(w/v)=f(w/v)=d(w/v)=1.
\]

\subsection*{Computation of the depth of $\t$}
Let us compute an Okutsu sequence of $\t$.

The distance set $D_1=D_1(\t,K)$ has no maximal element and, for
the family $A_0:=\{a_n+1\mid n\in\N\}$, the values 
\[
v(\t-(a_n+1))=(0,\ell_n), \quad \mbox{ for all }n\in\N,
\]
are cofinal in $D_1$. 

Also, $v(\t-i-1)=(1,0)>D_1$, the set $D_2$ has no maximal value and, for the set $A_1:=\{i+b_m\mid m\in\N\}$, the values 
\[
v(\t-(i+b_m))=(m,\ord_p(j_m))\in D_2
\]
are unbounded.
Therefore, $\left[A_0,A_1,A_2=\{\t\}\right]$ is an Okutsu sequence of $\t$. By Theorem \ref{ballsDepth}, the valuation $\vt$ has depth two and the MLV chains of $\vt$ consist of two consecutive limit augmentations.

Obviously, by applying the automorphisms in $\op{Gal}(L/K)$ to the Okutsu sequence of $\t$, we obtain Okutsu sequences of the same length of the other three roots of $g$. Thus, the four valuations $\vt$, $v_{-\t}$, $v_{\t'}$, $v_{-\t'}$ have depth two.  

\subsection*{Computation of the depth of all generators of $L/K$}
Let us show that all generators of $L/K$ have depth two too.

Clearly, $L=\langle1,\,i,\,\al,\,i\al\rangle_K$. Hence, every $\eta \in L$ is of the form
\[
\eta=a+b\, i+c\,\al+d\,i\al,\quad\mbox{ for some}\quad a,b,c,d\in K.
\]

For any choice of $r,s\in K$, with $r\ne0$, the length of the Okutsu sequences of $\eta$ and $r\eta+s$ coincide. Thus, we can assume that 
\[
\eta=b\, i+c\,\al+d\,i\al,\quad\mbox{ and}\quad \min\{v(b),v(c),v(d)\}=(0,0).
\]

As a consequence, as an element in $\Q_p(\!(t)\!)$, $\eta$ can be written in a unique way as
\[
\eta=\sum_{m\ge0}i\,q_mt^m + \sum_{n\ge0}r_nt^n, 
\]
for some $q_m,\,r_n\in\Q\cap\Z_p$. 

Suppose that $L=K(\eta)$. Then, since $\eta\not\in K(\al)$, there exists a minimal $m_0\in\N_0$ such that $q_{m_0}\ne0$. Now, consider the set $A_0:=\{c_n\mid n\in\N\}\sub K$, where
\[
c_n=a_n\,q_{m_0}t^{m_0} + \sum_{k=0}^{m_0}r_kt^k, \quad n\in\N.
\]

Clearly, the values $v(\eta-c_n)=(m_0,\ell_n+\ord_p(q_{m_0}))$ are cofinal in the set $D_1(\eta,K)$, and all these values are upper bounded by 
$(m_0+1,0)$.

Finally, consider the set $A_1:=\{\be_N\mid N\in\N\}\sub K(i)$, where
\[
\be_N:=\sum_{0\le m<N}i\,q_mt^m + \sum_{0\le n<N}r_nt^n.
\]
Clearly, the values $v(\eta-\be_N)>(N-1,0)$ are unbounded, so that
$\left[A_0,A_1,\{\eta\}\right]$ is an Okutsu sequence of $\eta$.
By Theorem \ref{OSdepth}, $\dep\left(L/K,v\right)=2$ and the MLV chains of $v_\eta$ consist of two consecutive limit augmentations.


\end{document}